\crefname{section}{Section}{Sections}
\crefname{subsection}{\S}{\S\S}
\crefname{subsubsection}{\S}{\S\S}
\theoremstyle{plain}
\newtheorem{lemma}{Lemma}[section]
\newtheorem{theorem}[lemma]{Theorem}
\theoremstyle{plain}
\newtheorem{theoremN}{Theorem}
\theoremstyle{plain}
\newtheorem{definition}[lemma]{Definition}
\newtheorem{example}[lemma]{Example}
\newtheorem{remark}[lemma]{Remark}
\newtheorem{remarks}[lemma]{Remarks}
\newtheorem{notation}[lemma]{Notation}
\crefname{definition}{definition}{definitions}
\crefname{ex}{example}{examples}
\crefname{exs}{example}{examples}
\crefname{remark}{remark}{remarks}
\crefname{remarks}{remark}{remarks}
\crefname{convention}{convention}{conventions}
\crefname{notation}{notation}{notations}
\crefname{table}{table}{tables}
\crefname{lemma}{lemma}{lemmas}
\crefname{proposition}{proposition}{propositions}
\crefname{corollary}{corollary}{corollaries}
\crefname{theorem}{theorem}{theorems}
\crefname{enumi}{}{}
\crefname{assumption}{assumption}{Assumptions}
\crefname{construction}{construction}{Constructions}
\crefname{equation}{}{}
\numberwithin{equation}{section}
\theoremstyle{nonumberplain}
\newtheorem{proof}{Proof}
\newcommand\altpf[2]{\newtheorem{#1}{#2}}
\newcommand\bC{{\mathbb C}}
\newcommand\bG{{\mathbb G}}
\newcommand\bH{{\mathbb H}}
\newcommand\bR{{\mathbb R}}
\newcommand\bZ{{\mathbb Z}}
\newcommand\cC{{\mathcal C}}
\newcommand\cD{{\mathcal D}}
\newcommand\cL{{\mathcal L}}
\DeclareMathOperator{\id}{id}
\DeclareMathOperator{\im}{\mathrm{im}}
\DeclareMathOperator{\Rep}{\mathrm{Rep}}
\DeclareMathOperator{\spn}{\mathrm{span}}
\newcommand{\cat}[1]{\textsc{#1}}
\newcommand{\qedhere}{\mbox{}\hfill\ensuremath{\blacksquare}}
\newcommand{\xrightarrowdbl}[2][]{%
  \xrightarrow[#1]{#2}\mathrel{\mkern-14mu}\rightarrow
}
\title{Semisimplicity manifesting as categorical smallness}
\author{Alexandru Chirvasitu}
\begin{document}

\date{}

\newcommand{\Addresses}{{% additional braces for segregating \footnotesize
  \bigskip
  \footnotesize

  \textsc{Department of Mathematics, University at Buffalo}
  \par\nopagebreak
  \textsc{Buffalo, NY 14260-2900, USA}  
  \par\nopagebreak
  \textit{E-mail address}: \texttt{achirvas@buffalo.edu}

  % % \medskip
  % % 
  % % \textsc{Department of Mathematics, INSTITUTION}
  % % \par\nopagebreak
  % % \textsc{ADDRESS}
  % % \par\nopagebreak
  % % \textit{E-mail address}: \texttt{??}
  % % 

}}

\maketitle

\begin{abstract}
  For a compact group $\mathbb{G}$, the functor from unital Banach algebras with contractive morphisms to metric spaces with 1-Lipschitz maps sending a Banach algebra $A$ to the space of $\mathbb{G}$-representations in $A$ preserves filtered colimits. Along with this, we prove a number of analogues: one can substitute unitary representations in $C^*$-algebras, as well as semisimple finite-dimensional Banach algebras (or finite-dimensional $C^*$-algebras) for $\mathbb{G}$.

  These all mimic results on the metric-enriched finite generation/presentability of finite-dimensional Banach spaces due to Ad{\'a}mek and Rosick{\'y}. We also give an alternative proof of that finite presentability result, along with parallel results on functors represented by compact metric, metric convex, or metric absolutely convex spaces. 
\end{abstract}

\noindent {\em Key words: compact group; Haar measure; Banach space; Banach algebra; semisimple; $C^*$-algebra; semiprojective; filtered colimit; representation; diagonal; averaging; finitely presentable; finitely generated; enriched; small object; metric space; non-expansive map; Lipschitz; convex space; absolutely convex space; monad}

\vspace{.5cm}

\noindent{MSC 2020: 22C05; 46H05; 46H15; 16K99; 18A30; 18A35; 18D20; 46B04; 32K05; 52A21; 46L05; 47B48; 47B01; 51F30; 46A19; 54E40; 54E45; 46A55; 18C15; 18C20}

%\tableofcontents

%%%%%%%%%%%%%%%%%%%%%%%%%%%%%%%%%%%%%%%%%%%%%%%%%%%%%%%%%%%%%%%%%%%%%%%%%%%%%
%%%%%%%%%%%%%%%%%%%%%%%%%%%%%%%%%%%%%%%%%%%%%%%%%%%%%%%%%%%%%%%%%%%%%%%%%%%%%
\section*{Introduction}

The smallness in the title is that of objects $c\in \cC$ in a category, by now a mainstay of the category-theory literature. There are several variations on the theme, but the common essence is that functors of the form $\cC(c,-)$ (the symbol means morphisms in $\cC$ with domain $c$) are required to preserve ``sufficiently filtered'' colimits (\Cref{se:prel} contains a brief recollection of filtration in the relevant sense). 
\begin{itemize}
\item Frequently (e.g. \cite[\S I, p.84]{zbMATH01799497}, \cite[Paragraph following Theorem 12.2.2]{riehl_ht}, \cite[Deﬁnition 5.9]{zbMATH07461229}) to $\cC(c,-)$ preserving certain transfinite chains of morphisms in a given class (one often also speaks of {\it compact} objects in that context). 
\item In additive categories the term sometimes \cite[\S 3.5, Proposition 5.1 and sentence following it]{zbMATH03425823} means that $\cC(c,-)$ preserves arbitrary coproducts (so filtered colimits of split embeddings). 
\item Then there are the related notions of {\it finite generation} (\cite[Definition 1.67]{ar}: $\cC(c,-)$ preserves filtered colimits of {\it monomorphisms} \cite[Definition 7.32]{ahs}) or {\it finite presentability} (\cite[Definition 1.1]{ar}: $\cC(c,-)$ preserves {\it all} filtered colimits), and so forth.
\end{itemize}

We will here be concerned with {\it enriched} categories \cite[\S 3.3]{riehl_ht}, i.e. those where hom spaces have some additional structure (so that smallness or finite presentability or generation also extend appropriately \cite[\S 4]{zbMATH07504760}). Specifically, the enrichment is mostly over the category $\cat{CMet}$ of complete metric spaces with non-expansive maps as morphisms.

The goal here is to analyze a number of interrelated phenomena whereby objects possessing some manner of representation-theoretic rigidity also acquire, by virtue of it, the types of smallness properties discussed above. Concretely, those objects, in \Cref{se:sml}, are compact groups on the one hand and (finite-dimensional) semisimple Banach algebras on the other, with finite-dimensional $C^*$-algebras as a particular case thereof. 

The semisimplicity of the paper's title is the aforementioned representation-theoretic rigidity: representations, in either case and in any reasonable sense, decompose as sums of (automatically finite-dimensional) irreducible summands. Ultimately, this is a manifestation of a strong form of {\it amenability} \cite[\S 5]{john-coh} or, in a reformulation, of one's ability to {\it average} \cite{cg-average}:
\begin{itemize}
\item against the {\it Haar (probability) measure} \cite[\S 1, pp.7-10]{rob} for compact groups;
\item and against the {\it diagonal} (\cite[Definition 1.1.]{zbMATH03389730}, \cite[Definition 1.9.19]{dales}, \cite[\S 7.2]{dael}) or {\it separability idempotent} \cite[\S II.1, Definition of Separability following Proposition 1.1]{zbMATH03342104} $e\in A\otimes A$ for a semisimple Banach algebra $A$.
\end{itemize}

An aggregate of \Cref{th:cgpsmall,th:ssalgsmall} presents as follows. 

\begin{theoremN}\label{thn:gpalg}
  \begin{enumerate}[(1)]
  \item\label{item:thn:gpalg:gp} For a compact group $\bG$ the functor
    \begin{equation*}
      \left(\cat{BAlg}_{1,\le 1}:=\text{unital Banach algebras with contractions}\right)
      \xrightarrow{\quad \Rep(\bG,-) \quad}
      \cat{CMet}
    \end{equation*}
    preserves filtered colimits, as does the functor
    \begin{equation*}
      \left(\cC^*_1:=\text{unital $C^*$-algebras}\right)
      \xrightarrow{\quad \Rep^*(\bG,-) \quad}
      \cat{CMet},
    \end{equation*}
    with the asterisk denoting {\it unitary} representations.

  \item\label{item:thn:gpalg:alg} For a semisimple unital Banach algebra $B$ the functor
    \begin{equation*}
      \cat{BAlg}_{1,\le 1}
      \xrightarrow{\quad\cat{BAlg}_1(B,-) \quad}
      \cat{CMet}
    \end{equation*}
    preserves filtered colimits, as does the functor
    \begin{equation*}
      \cC^*_1
      \xrightarrow{\quad\cC^*_1(B,-) \quad}
      \cat{CMet}
    \end{equation*}
    if $B\in \cC^*_1$ is a finite-dimensional $C^*$-algebra.  \qedhere
  \end{enumerate}
\end{theoremN}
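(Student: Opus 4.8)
The plan is to handle all four functors by a single template. Fix one of them, $F$, a filtered diagram $D\colon I\to\cC$ with $\cC$ the relevant ambient category, and the colimit cocone $(\iota_i\colon A_i\to A)_i$; write $D_{ik}\colon A_i\to A_k$ for the connecting maps. In both ambient categories these and the $\iota_i$ are contractive, $\bigcup_i\iota_i(A_i)$ is norm-dense in $A$, and the spaces $F(A)$ at issue are complete metric spaces for the sup-/operator-norm metric (a uniform limit of $\bG$-representations, resp.\ of unital or unital $*$-homomorphisms out of $B$, is again one of the same kind, and invertibility/unitarity/multiplicativity survive the limit). The $\cat{CMet}$-colimit of $\bigl(F(A_i)\bigr)_i$ is the completion of the set-colimit $Q:=\operatorname{colim}_i F(A_i)$ carrying the pseudometric $d_Q([x_i],[y_j])=\inf_k d_{F(A_k)}(D_{ik}x_i,D_{jk}y_j)$ (the infimum over cocones $i\to k\leftarrow j$; this already is a pseudometric because $I$ is filtered). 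So it suffices to prove (a) \emph{density}: $\bigcup_i\operatorname{im}F(\iota_i)$ is dense in $F(A)$; and (b) \emph{isometry}: $d_{F(A)}\bigl(F(\iota_i)x,F(\iota_j)y\bigr)=d_Q([x],[y])$ for $x\in F(A_i)$, $y\in F(A_j)$. Given (a) and (b) the comparison map is an isometry with dense image between complete spaces, hence an isomorphism in $\cat{CMet}$.

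Property (b), and half of (a), rest on one soft lemma of min--max/Dini type. For $a\in\iota_i(A_i)$ with a chosen preimage $a_i$, the quantity $\|D_{ik}a_i\|_{A_k}$ decreases along $I$ to $\|a\|_A$. Feeding in $a=x(g)-y(g)$ as $g$ ranges over $\bG$ (part~(1)), or $a=(x-y)(b)$ as $b$ ranges over the unit ball of $B$ --- compact, since $B$ is finite-dimensional (part~(2)) --- and noting the assignments are continuous with continuous pointwise limit, Dini's theorem (valid for monotone nets of continuous functions on a compact space) upgrades this to uniform convergence; hence $\inf_k\sup\|D_{ik}(x-y)\|_{A_k}=\sup\|\iota_i(x-y)\|_A=d_{F(A)}\bigl(F(\iota_i)x,F(\iota_j)y\bigr)$, which is precisely $d_Q$. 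The same lemma yields the lifting device needed in (a): any continuous map from a compact space into $A$ that is pointwise in $\bigcup_i\iota_i(A_i)$ lifts, for $i$ large, to a map into $A_i$ all of whose relevant sup-norms --- including those of algebraic ``defects'' such as $(g,h)\mapsto\phi(gh)-\phi(g)\phi(h)$ --- are within any prescribed $\varepsilon$ of those of the original.

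The substance is (a), and this is where semisimplicity --- averaging --- is used. Given $\pi\in F(A)$, first \emph{approximate}: by density, compactness, and the lifting device, one produces for a large index $k$ a \emph{near-representation} $\phi$ valued in $A_k$ (a continuous near-homomorphism $\bG\to A_k^{\times}$, resp.\ a unital linear near-homomorphism $B\to A_k$) that is uniformly $\delta$-multiplicative \emph{in the $A_k$-norm}, satisfies $\|\iota_k\circ\phi-\pi\|<\varepsilon$, and --- in the $C^*$ cases --- is uniformly close to being unitary-valued, resp.\ to being a system of matrix units. Then \emph{correct}: one replaces $\phi$, staying inside $A_k$, by a genuine $\sigma\in F(A_k)$ with $\|\sigma-\phi\|_{A_k}\le\eta(\delta)$ where $\eta(\delta)\to 0$ as $\delta\to 0$. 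Choosing $k$ so large that $\delta$, hence $\eta(\delta)$, is negligible then gives $d_{F(A)}\bigl(F(\iota_k)\sigma,\pi\bigr)<2\varepsilon$, proving (a).

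The correction step is the crux and the place I expect the real work. It is the quantitative ``near-homomorphisms are close to homomorphisms'' principle, supplied by the paper's two averaging mechanisms. For a compact group one first polishes each $\phi(g)$ to an honest unitary by functional calculus (in the $C^*$ case), then averages an intertwiner-type expression (such as $\int_\bG\phi(h)^{-1}\phi(hg)\,dh$, or the analogue used to straighten an approximate $2$-cocycle) against the Haar measure to annihilate the multiplicative defect, the cohomological triviality $H^{\ge 1}(\bG,-)=0$ --- itself a consequence of Haar averaging --- furnishing the bounded contracting homotopy that makes the iteration converge with the required error control. For a finite-dimensional semisimple Banach algebra $B$ the same role is played by averaging against the diagonal/separability idempotent $e\in B\otimes B$ (equivalently, $H^{\ge 1}(B,-)=0$ with a bounded splitting), and in the finite-dimensional $C^*$ case by the standard functional-calculus stability of approximate systems of matrix units. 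The $C^*$ incarnations are comparatively routine because functional calculus is robust; the plain-Banach versions are the delicate ones, since there is no inner product to ``unitarize'' against --- the correction must be done purely multiplicatively --- and one must track the norms of the averaging operators carefully, so that the bound $\eta(\delta)\to 0$ is uniform over the arbitrary target $A_k$ and over $g\in\bG$.
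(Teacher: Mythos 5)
Your overall architecture---reduce to (a) density and (b) isometry, prove (b) by a Dini/compactness argument, and prove (a) by first producing a near-representation into some $A_k$ and then correcting it by averaging against the Haar measure (for groups) or the diagonal/separability idempotent (for semisimple algebras), invoking the known Hyers--Ulam-type stability theorems---is exactly the paper's. The one place where your argument has a genuine hole is the production of the near-representation itself, i.e.\ your ``lifting device.'' You assert that any continuous map from a compact space into $A$ that is pointwise in $\bigcup_i\iota_i(A_i)$ lifts, for large $i$, to a continuous map into $A_i$, and that this follows from ``the same lemma'' (the Dini-type monotone-convergence statement). Neither half of this holds: the given $\pi\colon\bG\to A$ need not take values in $\bigcup_i\iota_i(A_i)$ at all (that union is merely dense), and even for a map that does, a continuous lift into a \emph{single} $A_i$ is not automatic---a continuous family can wander through infinitely many of the $A_i$. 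The Dini lemma only compares sup-norms of maps you already possess in some $A_i$; it cannot manufacture one. What is actually needed is a continuous \emph{approximate} selection: for every $\varepsilon>0$ and all large $i$, a continuous $\varphi_i\colon\bG\to A_i$ with $\sup_g\|\iota_i\varphi_i(g)-\pi(g)\|<\varepsilon$. The paper isolates this as a separate theorem and proves it via Michael's selection theorem applied to the lower hemicontinuous, closed-convex-valued correspondence $g\mapsto\{x\in A_i:\|\pi(g)-\iota_i x\|\le\varepsilon\}$; an elementary alternative is a partition of unity subordinate to a finite open cover of $\bG$ on whose members $\pi$ oscillates by less than $\varepsilon$. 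Either way, this ingredient is missing from your part (1). (For part (2) the issue is harmless: $B$ is finite-dimensional, so one lifts a basis.)

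Beyond that the proposal is sound and tracks the paper, modulo the choice of route in part (2): you go directly through the separability idempotent and Johnson's iteration, which is the paper's \emph{alternative} proof of that half; its primary proof instead reduces (2) to (1) by writing the finite-dimensional semisimple $B$ as the span of a compact subgroup of $B^{\times}$ and extending the approximating group representations linearly, using that uniformly close representations are conjugate. Both work. Your closing remark about needing the stability estimates $\eta(\delta)$ to be uniform over the target $A_k$ is well taken and is precisely the point the paper flags when citing de la Harpe--Karoubi and Johnson.
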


There are a few alternative arrangements of the complex of proofs in \Cref{se:sml}, emphasizing that either half of \Cref{thn:gpalg} is recoverable from the other. Either way, the substance of the argument fits into the following sketchy mold:
\begin{enumerate}[(a)]
\item A representation
  \begin{equation*}
    \bullet
    \xrightarrow{\quad\varphi\quad}
    A=\varinjlim_i A_i
  \end{equation*}
  into a filtered colimit (with $\bullet$ a group or an algebra, as appropriate) can be approximated in the appropriate sense by a {\it map} $\bullet\xrightarrow{\varphi_i}A_i$ for some $i$.
\item Nevertheless, $\varphi_i$ can be chosen  so as to {\it almost} satisfy the requisite multiplicativity.
\item Whence it is uniformly close to an {\it actual} representation $\bullet\xrightarrow{}A_i$, essentially by the aforementioned averaging techniques afforded by the Haar measure and/or separability idempotents: \cite[Proposition 4]{zbMATH03577502} for the compact-group case and \cite[Theorem 3.1]{zbMATH04063807} for Banach or $C^*$-algebras. 
\end{enumerate}

This last portion of the argument, incidentally, is an instance of what a vast literature has come to refer to as {\it Hyers-Ulam} (or sometimes {\it Hyers-Ulam-Rassias}) {\it stability} (\cite[Chapter 1, discussion following Theorem 3 on p.4]{zbMATH06740635} and that book's extensive references): maps that are close to satisfying various constraints (multiplicativity, affineness, linearity, etc.) are close in various ways to maps that actually satisfy said constraints. The phrase is motivated by said authors' work \cite{zbMATH03101068,zbMATH03046328,zbMATH03074215,zbMATH03618859} laying the foundations of the subject, and \cite{zbMATH03577502,zbMATH04063807} are both mentioned in the survey paper \cite{zbMATH00149467} on the topic. 

The general shape of \Cref{thn:gpalg} places the discussion in the same circle of ideas as the $\cat{CMet}$-enriched finite generation \cite[Proposition 7.6]{zbMATH07461229} and in fact presentability \cite[Theorem 3.1]{zbMATH07760731} of finite-dimensional Banach spaces. Having come thus close to the matter, \Cref{th:banfinpres} gives an alternative proof of said \cite[Theorem 3.1]{zbMATH07760731}, deducing finite presentability from finite generation and proving the latter by an infinite-dimensional-analytic-manifold argument that might be of some independent interest. The subsequent discussion is devoted to some variations on the theme; a paraphrased \Cref{th:fdimim}:

\begin{theoremN}
  In each of the following cases, the $\cat{CMet}$-valued functor represented by $K$, defined on the category $\cat{Ban}_{\le 1}$ of Banach spaces with contractions and restricted to morphisms taking values in $(\le D)$-dimensional subspaces for some positive integer $D$, preserves filtered colimits:
  \begin{enumerate}[(1)]
  \item $K$ is a compact metric space and the morphisms are non-expansive maps.

  \item $K$ is a compact metric convex space and the morphisms are affine non-expansive maps (discussion preceding \Cref{re:distmonad}).

  \item $K$ is compact metric absolutely convex space and the morphisms are absolutely affine non-expansive (\Cref{def:acvx}).  \qedhere
  \end{enumerate}
\end{theoremN}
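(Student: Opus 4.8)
For the duration of this sketch write $\cM_D(K,W)$, for a Banach space $W$, for the metric space of structure-preserving non-expansive maps $K\to W$ — non-expansive in case~(1), affine non-expansive in case~(2), absolutely affine non-expansive in case~(3) — whose image spans a linear subspace of dimension at most $D$, metrized by $d(\varphi,\psi)=\sup_{x\in K}\|\varphi(x)-\psi(x)\|$ (finite since $K$ is compact). I would first record that this is a \emph{complete} space: uniform limits of structure-preserving non-expansive maps are again such, and the span-dimension bound passes to uniform limits because $D+1$ of the limiting values arise as limits of $(D+1)$-tuples whose linear-dependence coefficients lie on the compact unit sphere of $\ell^1_{D+1}$, a cluster point of which witnesses dependence of the limits. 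Thus $W\mapsto\cM_D(K,W)$ is a genuine $\cat{CMet}$-valued functor on $\cat{Ban}_{\le 1}$ (post-composition by a contraction is $1$-Lipschitz on these mapping spaces and preserves both the structure and the dimension bound), and it is the functor in the statement. The plan is then to fix a filtered system $(V_i)_{i\in I}$ with colimit $V=\varinjlim_iV_i$ and coprojections $f_i\colon V_i\to V$ and to prove that the canonical comparison
\[
  \Phi\colon\ \varinjlim_i\,\cM_D(K,V_i)\ \longrightarrow\ \cM_D(K,V)
\]
is an isometric isomorphism. Because any linear contraction carries each of the three kinds of structure-preserving non-expansive map to the same kind, a single argument handles all three cases, and I will not separate them.

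I would split the claim into (i) $\Phi$ isometric and (ii) $\im\Phi$ dense. Step (i) will use only compactness of $K$: the content is the ``$\le$'' comparison, and the key point is that the colimit norm is a \emph{decreasing} limit $\|f_k(v)\|=\inf_{k'\ge k}\|f_{kk'}(v)\|$, so that for $\varphi_i\in\cM_D(K,V_i)$, $\psi_j\in\cM_D(K,V_j)$ with $\sup_x\|f_i\varphi_i(x)-f_j\psi_j(x)\|<\varepsilon$ and any $\delta>0$, the sets $U_k:=\{x\in K:\|f_{ik}\varphi_i(x)-f_{jk}\psi_j(x)\|<\varepsilon+\delta\}$ (for $k\ge i,j$) are open, increasing along the filtered index set, and cover $K$; compactness then yields a single $k$ with $U_k=K$, and letting $\delta\downarrow 0$ finishes. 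Note this step works verbatim without the dimension bound — it is step (ii) that needs $D$.

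For (ii) I would reduce everything to the following standard fact about filtered colimits of Banach spaces, which is where I expect the real work to lie: \emph{for each finite-dimensional $W\subseteq V$ and each $\varepsilon>0$ there are $i\in I$ and a linear contraction $S\colon W\to V_i$ with $\|f_i(Sw)-w\|\le\varepsilon\|w\|$ for all $w$.} Granting it, for $\varphi\in\cM_D(K,V)$ I would take $W:=\spn\varphi(K)$ (of dimension $\le D$), pick such an $S$ for a small $\varepsilon'$, and set $\varphi_i:=S\circ\varphi$: this lies in $\cM_D(K,V_i)$ (non-expansive; structure-preserving since $S$ is linear; image spanning $S(W)$ of dimension $\le D$), and $d(f_i\varphi_i,\varphi)\le\varepsilon'\sup_x\|\varphi(x)\|$ with the last supremum finite by compactness of $K$ — so $\varphi\in\overline{\im\Phi}$, and together with (i) this gives the isometric isomorphism.

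The hard part is thus the displayed fact, and its one real subtlety is exactly the colimit-theoretic one: because the colimit norm is an \emph{infimum} of stagewise norms, a naive lift to some $V_{i_0}$ of an Auerbach basis of $W$ (using density of $\bigcup_i\im f_i$) produces a linear $R_{i_0}\colon W\to V_{i_0}$ that becomes almost norm-nondecreasing only after applying $f_{i_0}$ and need not be norm-nonincreasing inside $V_{i_0}$. I would repair this by pushing forward along the diagram: the functions $c\mapsto\|f_{i_0k}(R_{i_0}c)\|_{V_k}$ on the \emph{compact} unit sphere of $W$ form a decreasing net of continuous functions converging pointwise to $c\mapsto\|f_{i_0}(R_{i_0}c)\|_V\le(1+\varepsilon')\|c\|$, so by Dini's theorem (valid for monotone nets on a compact space) the convergence is uniform; taking $i$ past the stage where the supremum falls below $1+2\varepsilon'$ gives an almost-isometric $T\colon W\to V_i$, and $S:=(1+2\varepsilon')^{-1}T$ works after readjusting $\varepsilon'$. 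Compactness of the unit sphere of $W$ — i.e.\ $\dim W<\infty$, precisely what restricting to $(\le D)$-dimensional images secures — is the linchpin, echoing the role of finite-dimensionality for Banach spaces in \Cref{th:banfinpres}.
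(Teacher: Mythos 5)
Your proposal is correct, and its top-level architecture coincides with the paper's: the isometry of the comparison map rests only on compactness of $K$ (your open-cover argument is a minor variant of the net/subnet argument in \Cref{le:cpct2metric}), and density of the image is obtained by factoring $\varphi$ through $W=\spn\varphi(K)$, of dimension $\le D$, and approximating the inclusion $W\hookrightarrow V$ by contractions into the $V_i$ --- which is exactly the paper's two-sentence reduction of \Cref{th:fdimim} to \Cref{th:banfinpres}. Where you genuinely diverge is in how that finite-dimensional approximation fact is established. The paper proves \Cref{th:banfinpres} by splitting into the case of isometric connecting maps (handled by a Grassmannian/graph-of-an-operator argument, or by citing \cite[Proposition 7.6]{zbMATH07461229}) and the case of surjective connecting maps (\Cref{le:bansurjconn}), then gluing the two. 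You instead give one direct argument valid for arbitrary filtered colimits: approximately lift an Auerbach basis of $W$ to some $V_{i_0}$, observe that the stagewise norms $c\mapsto\|f_{i_0k}(R_{i_0}c)\|$ decrease pointwise to the colimit norm, and invoke Dini's theorem for monotone nets on the compact unit sphere of $W$ to push far enough along the diagram that the lift becomes an almost-contraction, then rescale. This is essentially the Ad{\'a}mek--Rosick{\'y}/Rosick{\'y} argument (\cite[Proposition 7.6]{zbMATH07461229}, \cite[Theorem 3.1]{zbMATH07760731}) that the paper cites and was deliberately offering an alternative to (cf.\ \Cref{re:dini}); it is correct, somewhat more uniform than the paper's two-case decomposition, and buys a self-contained proof at the price of the Banach-manifold picture the paper wanted to exhibit. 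Two small points you should make explicit: the pointwise limit $c\mapsto\|f_{i_0}(R_{i_0}c)\|_V$ in your Dini step is continuous (it is, being the norm composed with a bounded linear map --- Dini fails without continuity of the limit), and the net version of Dini you need is indeed available (\cite[Corollary 6]{MR3462046}).
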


%%%%%%%%%%%%%%%%%%%%%%%%%%%%%%%%%%%%%%%%%%%%%%%%%%%%%%%%%%%%%%%%%%%%%%%%%%%%%
\subsection*{Acknowledgements}

Thanks are due to M. Brannan for numerous enlightening exchanges and M. Reyes for invaluable help in gaining access to some of the cited literature.

This work is partially supported by NSF grant DMS-2001128. 

%%%%%%%%%%%%%%%%%%%%%%%%%%%%%%%%%%%%%%%%%%%%%%%%%%%%%%%%%%%%%%%%%%%%%%%%%%%%%
%%%%%%%%%%%%%%%%%%%%%%%%%%%%%%%%%%%%%%%%%%%%%%%%%%%%%%%%%%%%%%%%%%%%%%%%%%%%%
\section{Preliminaries}\label{se:prel}

While $C^*$-algebras will of course be complex, we allow the not-uncommon ambiguity (\cite[p.953]{zbMATH03879553}, \cite[p.2]{zbMATH07461229}, etc.) of either real or complex scalars for Banach spaces or algebras. Little of the material depends substantively on the choice, and whatever does is easily adapted from one setup to the other.

Some common symbols:

\begin{notation}\label{not:common}
  \begin{enumerate}[(1), wide=0pt]
  \item\label{item:not:common:cmet} $\cat{CMet}$ is the category of complete metric spaces with {\it short} (or {\it non-expanding} \cite[Definition 1.4.1]{bbi} with {\it non-expansive} as a variant, or {\it 1-Lipschitz} \cite[Definition 1.1]{zbMATH05114904}) maps as morphisms, i.e. those which do not increase distances:
    \begin{equation*}
      d_Y(fx,fx')\le d_X(x,x')
      ,\quad\forall x,x'\in X\quad\text{for}\quad
      (X,d_X)\xrightarrow{\quad f\quad}(Y,d_Y). 
    \end{equation*}
    This seems to be the most popular notion of morphism when metric spaces are treated categorically: see \cite[\S 1]{zbMATH03243151}, \cite[\S 1]{zbMATH07461229}, \cite[\S 6]{zbMATH07504760}, \cite[\S 1]{zbMATH07469564} and so on.

    Following \cite[\S 1]{zbMATH06916415} or \cite[\S 1]{zbMATH07469564}, say, when metric spaces are allowed infinite distances we decorate the respective category symbols with an `$\infty$' subscript: $\cat{CMet}_{\infty}$, say. Infinite distances are occasionally taken for granted anyway, as in \cite[Definition 1.1.1]{bbi}. 
    
  \item $\cat{Ban}$ is the category of Banach spaces and bounded linear maps:
    \begin{equation*}
      \cat{Ban}(E,F):=\cL(E,F):=\left\{E\xrightarrow{f}F\ |\ f\text{ bounded and linear}\right\}. 
    \end{equation*}
    $\cat{Ban}_{\le 1}$, on the other hand, has the same objects and maps of norm $\le 1$ as morphisms. The latter is the better-behaved and in wide use in the vast literature: \cite{poth}, \cite[Example 1.48]{ar}, $\cat{Ban}_1$ in \cite[\S 3]{zbMATH05954612} and much other literature \cite{zbMATH03879553,zbMATH03955758,zbMATH04004954,zbMATH00097447,zbMATH01563366}, $\mathfrak{B}_1$ in \cite[paragraph preceding Lemma 1.1]{zbMATH06465637}, \cite[p.2]{zbMATH07461229}, their references, etc.
    
  \item $\cat{BAlg}$ and $\cat{BAlg}_1$ are the categories of Banach (unital) algebras respectively with bounded, linear, multiplicative (unital) maps. Further `$\le 1$' subscripts indicate that we only consider short maps (as in $\cat{BAlg}_{1,\le 1}$).

    The norm is always assumed \cite[Definition 1.1.2 and subsequent discussion]{dael} to satisfy
    \begin{equation*}
      \|ab\|\le \|a\|\cdot \|b\|\quad\text{and}\quad \|1\|=1\text{ in the unital case}. 
    \end{equation*}

  \item $\cC^*_1$ is the category of unital $C^*$-algebras.

  \item For a Banach algebra $A$ we write $A^{\times}$ for the multiplicative group of invertible elements therein. If $A$ is additionally $C^*$, $U(A)\subset A^{\times}$ is its unitary group. Both of these are (typically infinite-dimensional) {\it Banach Lie groups} in the sense of \cite[Definition IV.I]{neeb-inf} or \cite[\S VI.5]{lang-fund}. 
  \end{enumerate}  
\end{notation}

We will frequently refer to {\it ($\kappa$-)filtered colimits} in various categories $\cC$ for cardinals $\kappa$. These are colimits of functors $\cD\xrightarrow{}\cC$ whose domain category $\cD$ is itself ($\kappa$-)filtered. This, in turn, means \cite[Remark 1.21 ]{ar} that $\cD$ is non-empty, sets of $<\kappa$ objects map to some common object (depending on the set), and sets of $<\kappa$ parallel morphisms $c\to c'$ are coequalized by some arrow $c'\to c''$. Just plain {\it filtered} means $\aleph_0$-filtered, i.e. all of the above for {\it finite} sets of objects and morphisms (see also \cite[\S IX.1]{zbMATH01216133}).

Unless specified otherwise, we will always assume the domain category $\cD$ is a poset with one morphism $d\to d'$ precisely when $d\le d'$. In that context the term {\it directed} (in place of {\it filtered}) is also in wide use (\cite[Example 11.28(4)]{ahs}, for instance).  

%%%%%%%%%%%%%%%%%%%%%%%%%%%%%%%%%%%%%%%%%%%%%%%%%%%%%%%%%%%%%%%%%%%%%%%%%%%%%
%%%%%%%%%%%%%%%%%%%%%%%%%%%%%%%%%%%%%%%%%%%%%%%%%%%%%%%%%%%%%%%%%%%%%%%%%%%%%
\section{Smallness phenomena: compact groups and semisimple algebras}\label{se:sml}

The following simple remark will be of repeated use either directly or in avatar forms, so is worth singling out.

\begin{lemma}\label{le:cpct2metric}
  Let $K$ be a compact Hausdorff space and $(X,d)=\varinjlim_i (X,d_i)$ a filtered colimit in $\cat{CMet}_{\infty}$. The canonical morphism
  \begin{equation}\label{eq:colimcont2contcolim}
    \varinjlim_i \cat{Cont}(K,X_i)
    \xrightarrow{\quad}
    \cat{Cont}(K,\varinjlim_i X_i)
    =
    \cat{Cont}(K,X)\in \cat{CMet}_{\infty}. 
  \end{equation}
  is an isometric embedding. 
\end{lemma}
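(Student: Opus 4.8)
The plan is to make both sides of \Cref{eq:colimcont2contcolim} completely explicit via the standard description of filtered colimits in $\cat{CMet}_\infty$, and then to reduce the assertion to a minimax exchange that the compactness of $K$ delivers. Write the diagram as $i\mapsto(X_i,d_i)$ over a directed poset $(I,\le)$, with transition maps $\phi_{ii'}\colon X_i\to X_{i'}$ (for $i\le i'$) and structure maps $\iota_i\colon X_i\to X$. I would first record that the underlying set of $X$ is the set-theoretic $\varinjlim_i X_i$, that before metric identification and completion the pseudometric on it is
\begin{equation*}
  d_X(\iota_i x,\iota_j y)=\inf_{k\ge i,j} d_k\bigl(\phi_{ik}x,\phi_{jk}y\bigr)\qquad(x\in X_i,\ y\in X_j),
\end{equation*}
and that, the $\phi$'s being non-expansive, the right-hand side is a non-increasing limit along the cofinal directed set of common upper bounds of $i$ and $j$. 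Since neither metric identification nor completion changes distances between points of the images of the $X_i$, the displayed formula holds as stated. The identical discussion applies to $\varinjlim_i\cat{Cont}(K,X_i)$: each $\cat{Cont}(K,X_i)$ is an object of $\cat{CMet}_\infty$ for the supremum metric $d_\infty(f,g)=\sup_{p\in K}d_i(fp,gp)$, and the transition maps $f\mapsto\phi_{ii'}\circ f$ are non-expansive.

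Next I would fix $f\in\cat{Cont}(K,X_i)$ and $g\in\cat{Cont}(K,X_j)$ and set $h_k(p):=d_k\bigl(\phi_{ik}(fp),\phi_{jk}(gp)\bigr)$ for $p\in K$ and $k\ge i,j$. Two elementary points: each $h_k\colon K\to[0,\infty]$ is continuous, being a composite of continuous maps with the distance function $d_k$ (which is continuous into $[0,\infty]$); and for fixed $p$ the assignment $k\mapsto h_k(p)$ is non-increasing, because $\phi_{ik'}=\phi_{kk'}\phi_{ik}$, likewise for $j$, and $\phi_{kk'}$ is non-expansive. Unwinding the two colimit formulas, the canonical map of \Cref{eq:colimcont2contcolim} carries the classes of $f$ and $g$ to functions whose distance in $\cat{Cont}(K,X)$ is $\sup_{p}d_X(\iota_i(fp),\iota_j(gp))=\sup_{p}\inf_k h_k(p)$, whereas their distance in $\varinjlim_i\cat{Cont}(K,X_i)$ is $\inf_k\sup_{p}h_k(p)$. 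Hence the map is an isometric embedding exactly when
\begin{equation*}
  \inf_{k\ge i,j}\ \sup_{p\in K}h_k(p)=\sup_{p\in K}\ \inf_{k\ge i,j}h_k(p)
\end{equation*}
for all such $f,g$; and ``$\ge$'' is automatic, being equivalent to non-expansiveness of the (functorial) canonical map.

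The crux, and the one step I expect to require genuine work, is the reverse inequality, proved by a routine compactness argument. Set $S:=\sup_{p}\inf_k h_k(p)$; assume $S<\infty$ (otherwise there is nothing to prove, as $\inf\sup\ge\sup\inf$) and fix $\varepsilon>0$. For each $p\in K$ choose $k_p\ge i,j$ with $h_{k_p}(p)<S+\varepsilon$; continuity of $h_{k_p}$ and finiteness of its value at $p$ make $U_p:=h_{k_p}^{-1}\bigl([0,S+\varepsilon)\bigr)$ an open neighbourhood of $p$, so compactness of $K$ yields $p_1,\dots,p_n$ with $K=\bigcup_m U_{p_m}$. Choosing $k^*\in I$ above all of $k_{p_1},\dots,k_{p_n}$ and using monotonicity of $k\mapsto h_k$ gives $h_{k^*}<S+\varepsilon$ on each $U_{p_m}$, hence on $K$, so $\inf_k\sup_{p}h_k(p)\le S+\varepsilon$; letting $\varepsilon\downarrow0$ finishes the argument. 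I expect the only points needing care to be those forced by possibly infinite distances — continuity of the $d_k$ into $[0,\infty]$ and the harmless reduction to $S<\infty$ — together with stating the description of colimits in $\cat{CMet}_\infty$ accurately; the minimax exchange itself is standard.
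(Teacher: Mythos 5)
Your proof is correct and follows essentially the same route as the paper's: both reduce the statement to the minimax identity $\inf_k\sup_p h_k=\sup_p\inf_k h_k$ for the non-increasing net of continuous functions $h_k$ and then settle the nontrivial inequality by a Dini-type compactness argument (cf. \cref{re:dini}). The only difference is that you run the compactness step via a finite open subcover while the paper uses a convergent subnet; both are standard implementations of the same idea, and your explicit description of the colimit metric is a harmless elaboration of what the paper leaves implicit.
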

\begin{proof}
  The non-expansivity of \Cref{eq:colimcont2contcolim} is immediate, and makes no use of compactness (indeed, it holds for any topological space $K$ whatsoever). What we have to prove, then, is that for
  \begin{equation*}
    \text{continuous }K\xrightarrow{\quad f,g\quad} X_i
  \end{equation*}
  we have
  \begin{equation}\label{eq:supinfsup}
    \sup_{p\in K}d(\iota_i f p, \iota_i g p)
    =
    \inf_{j\ge i}\sup_{p\in K}d(\iota_{ji} f p, \iota_{ji} g p)
  \end{equation}
  for the colimit structure maps
  \begin{equation*}
    X_i\xrightarrow{\quad\iota_i\quad}X
    \quad\text{and}\quad
    X_i\xrightarrow{\quad\iota_{ji}\quad}X_{j},\quad i\le j
  \end{equation*}
  (cf. \cite[condition 2. in the proof of Theorem 3.1]{zbMATH07760731}). The non-expansivity of all of the maps in the factorization
  \begin{equation*}
    \begin{tikzpicture}[auto,baseline=(current  bounding  box.center)]
      \path[anchor=base] 
      (0,0) node (l) {$X_i$}
      +(2,.5) node (u) {$X_j$}      
      +(4,0) node (r) {$X$}
      ;
      \draw[->] (l) to[bend left=6] node[pos=.5,auto] {$\scriptstyle \iota_{ji}$} (u);
      \draw[->] (u) to[bend left=6] node[pos=.5,auto] {$\scriptstyle \iota_j$} (r);
      \draw[->] (l) to[bend right=6] node[pos=.5,auto,swap] {$\scriptstyle \iota_i$} (r);
    \end{tikzpicture}
  \end{equation*}
  makes plain the `$\le$' half of \Cref{eq:supinfsup}. As for the opposite inequality `$\ge$', a $j$-indexed {\it net} \cite[Definition 11.2]{wil-top}
  \begin{equation*}
    p_j\in X_i
    ,\quad
    \sup_{p\in K}d(\iota_i f p, \iota_i g p)
    \le
    d(\iota_{ji} f p_j, \iota_{ji} g p_j)-\varepsilon
    \text{ for some fixed }\varepsilon>0
  \end{equation*}
  will have \cite[Theorem 17.4]{wil-top} a subnet converging to some $p_0\in X_i$, whence the absurd conclusion that
  \begin{equation*}
    \sup_{p\in K}d(\iota_i f p, \iota_i g p)
    \le
    d(\iota_{i} f p_0, \iota_{i} g p_0)-\varepsilon.
  \end{equation*}
\end{proof}

\begin{remark}\label{re:dini}
  The aforementioned \cite[Theorem 3.1, proof of (2)]{zbMATH07760731} appeals to {\it Dini's theorem} \cite[Theorem 12.1]{zbMATH05012871}, to the effect that pointwise-convergent monotone sequences of continuous functions on compact (metrizable, in that reference) spaces are uniformly convergent. There are appropriate generalizations \cite[Corollary 6]{MR3462046} applicable to \Cref{le:cpct2metric}, but the direct argument did not appear substantively longer. 
\end{remark}

As a follow-up to \Cref{le:cpct2metric}, we record a smallness result of sorts for compact spaces with respect to maps into Banach spaces. Apart from whatever intrinsic interest the statement might possess, the proof will illustrate techniques useful later. 

\begin{theorem}\label{th:cpct2ban}
  For a compact Hausdorff space $K$ the functor
  \begin{equation*}
      \cat{Ban}_{\le 1}
      \xrightarrow{\quad \cat{Cont}(K,-) \quad}
      \cat{CMet}
    \end{equation*}
    preserves filtered colimits.
\end{theorem}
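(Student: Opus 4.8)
The plan is to show that for every filtered colimit $E=\varinjlim_i E_i$ in $\cat{Ban}_{\le 1}$ the canonical comparison morphism
\[
  \Phi\colon\ \varinjlim_i \cat{Cont}(K,E_i)\ \longrightarrow\ \cat{Cont}(K,E)
\]
is an isomorphism in $\cat{CMet}$, i.e.\ a bijective isometry. I would start by recalling the standard model for filtered colimits of Banach spaces and contractions (cf.\ \cite{ar}): $E$ is the completion of $\bigcup_i\iota_i(E_i)$, each structure map $\iota_i\colon E_i\to E$ is a contraction, the union of their images is dense, and $\|\iota_i x\|_E=\lim_{j\ge i}\|\iota_{ji}x\|_{E_j}$ for $x\in E_i$. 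That very description exhibits the cocone $(E_i\xrightarrow{\iota_i}E)$ as a colimit cocone in $\cat{CMet}$, hence also in $\cat{CMet}_{\infty}$ (the diagram being connected, no infinite distances arise), so \Cref{le:cpct2metric} applies verbatim and shows that $\Phi$ is an isometric embedding. Since a filtered colimit in $\cat{CMet}$ is complete, $\Phi$ has closed image; and because the union of the images of the maps $\cat{Cont}(K,E_i)\to\varinjlim_i\cat{Cont}(K,E_i)$ is dense there, $\Phi$ is surjective precisely when the family of maps $\{\iota_i\circ g\ :\ i\ \text{and}\ g\in\cat{Cont}(K,E_i)\}$ is dense in $\cat{Cont}(K,E)$. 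That density is the only thing left to prove.

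For it, fix a continuous $\varphi\colon K\to E$ and $\varepsilon>0$. The image $\varphi(K)$ is a compact subset of $E$ and $\bigcup_j\iota_j(E_j)$ is dense in $E$, so finitely many open balls $B\big(\iota_{j_k}(x_k),\varepsilon\big)$, $k=1,\dots,n$, with $x_k\in E_{j_k}$, cover $\varphi(K)$. Using filteredness of the index category I would pick $i$ above $j_1,\dots,j_n$ and set $y_k:=\iota_{ij_k}(x_k)\in E_i$, so that $\iota_i(y_k)=\iota_{j_k}(x_k)$ and the balls $B(\iota_i(y_k),\varepsilon)$ still cover $\varphi(K)$. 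Since $K$ is compact Hausdorff, hence normal, the finite open cover $U_k:=\varphi^{-1}\big(B(\iota_i(y_k),\varepsilon)\big)$ of $K$ admits a subordinate partition of unity $\{\rho_k\}_{k=1}^n$; put $g:=\sum_k\rho_k\,y_k\colon K\to E_i$, a continuous map. Then for every $p\in K$,
\[
  \|\varphi(p)-\iota_i g(p)\|_E\ \le\ \sum_k\rho_k(p)\,\|\varphi(p)-\iota_i(y_k)\|_E,
\]
and each summand with $\rho_k(p)>0$ has $p\in U_k$, hence $\|\varphi(p)-\iota_i(y_k)\|_E<\varepsilon$; thus $d(\varphi,\iota_i\circ g)\le\varepsilon$. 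As $\varepsilon>0$ was arbitrary, this is the required density, and $\Phi$ is a bijective isometry.

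The only genuinely substantive point is the approximation in the second paragraph, and it is exactly where the three hypotheses cooperate: compactness of $K$ lets $\varphi$ be captured, up to $\varepsilon$, by finitely many values chosen from finite stages; filteredness consolidates those finitely many stages into a single $E_i$; and normality of $K$ converts the finite family of "local constants" $y_k$ into an honest continuous $E_i$-valued map via a partition of unity. Everything else — the isometric-embedding half delivered by \Cref{le:cpct2metric}, completeness of the colimit, and the reduction of surjectivity to density — is formal. I expect the write-up to be short, and essentially this same partition-of-unity-plus-filteredness template (in group- and algebra-flavoured variants, with a Hyers--Ulam averaging step inserted to repair near-multiplicativity) to drive the later results advertised in the introduction.
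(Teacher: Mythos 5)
Your proof is correct, and it reaches the same overall decomposition as the paper (reduce to \Cref{le:cpct2metric} for the isometric-embedding half, then prove that maps $K\to E$ are uniformly approximable by maps into the stages $E_i$), but it handles the approximation step by a genuinely different, more elementary device. The paper packages the problem as a continuous-selection problem: for fixed $\varepsilon$ and large $i$ it forms the correspondence $p\mapsto\{x\in E_i:\|\varphi(p)-\iota_i x\|\le\varepsilon\}$, checks that it is lower hemicontinuous with non-empty closed convex values, and invokes the Michael selection theorem to produce the continuous $\varphi_i$. Your finite-cover-plus-partition-of-unity construction produces the approximant by hand; it is in effect the one-step approximate-selection argument that underlies the proof of Michael's theorem in the convex-valued case, so nothing is lost, and you gain self-containedness (no appeal to a black-box selection theorem) at the cost of a slightly longer write-up. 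Your explicit justification that the $\cat{Ban}_{\le 1}$-colimit cocone is also the $\cat{CMet}_{\infty}$-colimit of the underlying metric spaces, and the closed-image-plus-density reduction of surjectivity, are points the paper leaves implicit; making them explicit is harmless and arguably clearer. The only caveat worth flagging is that your partition-of-unity approximant is tailored to \emph{convex} (indeed linear) targets, exactly as the paper's convex-valued selection is, so neither argument generalizes past that without modification --- but for the statement at hand both are complete.
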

\begin{proof}
  Consider a filtered colimit $E=\varinjlim_i E_i$ in $\cat{Ban}_{\le 1}$ with structure maps $E_i\xrightarrow{\iota_i}E$.  \Cref{le:cpct2metric} settles all but the {\it surjectivity} of the map
  \begin{equation*}
      \varinjlim_i \cat{Cont}(K,E_i)
      \lhook\joinrel\xrightarrow{\quad}
      \cat{Cont}\left(K,\varinjlim_i E_i\right)
      =
      \cat{Cont}(K,E),
    \end{equation*}
    which will thus be the focus of the proof. The claim, then, is that a continuous map $K\xrightarrow{\varphi}E$ is arbitrarily uniformly approximable by continuous maps $K\xrightarrow{\varphi_i} E_i$. 

    Consider, for each $i$ and fixed $\varepsilon>0$, the set-valued map (or {\it correspondence} \cite[Definition 17.1]{zbMATH05265624})
    \begin{equation*}
      K\ni p
      \xmapsto{\quad\Phi_{\varepsilon,i}\quad}
      \{x\in E_i\ |\ \|\varphi(p)-\iota_i x\|\le\varepsilon\}
      \subset E_i.
    \end{equation*}
    Each $\Phi_{\varepsilon,i}$ takes closed convex values, all non-empty if $i$ is sufficiently large (no matter how small an $\varepsilon>0$ we choose and fix to begin with). The usual triangle-inequality argument also shows that $\Phi_{\varepsilon,i}$ are {\it lower hemicontinuous} \cite[Definition 17.2]{zbMATH05265624} (or sometimes \cite[p.712]{zbMATH06329568} {\it semi}continuous): for every open $U\subseteq E_i$,
    \begin{equation*}
      \left\{p\in K\ |\ \Phi_{\varepsilon,i}(p)\cap U\ne\emptyset\right\}\text{ is open}.
    \end{equation*}
    {\it Michael selection} (\cite[Theorem 17.66]{zbMATH05265624} or \cite[Theorem 1.1]{zbMATH06329568}) then shows that there is a continuous map
    \begin{equation*}
      K\ni p
      \xmapsto{\quad}
      \varphi_i(p)\in \Phi_{\varepsilon,i}\subset E_i
    \end{equation*}
    for sufficiently large $i$ (again, for arbitrarily small $\varepsilon>0$ chosen initially). By construction, the composition $K\xrightarrow{\iota_i\varphi_i}$ will be $\varepsilon$-close to the original $\varphi$. 
\end{proof}

Throughout the sequel, {\it representations} of a topological group $\bG$ into a topological (mostly Banach) algebra $A$ are understood to be continuous morphisms $\bG\to A^{\times}$ (always with the obvious topology on the latter, i.e. that induced by the norm in the Banach case). A variant is that of {\it unitary} representations, where one restricts the codomain to the unitary group $U(A)$ of a $C^*$-algebra.

\begin{theorem}\label{th:cgpsmall}
  Let $\bG$ be a compact group. 
  \begin{enumerate}[(1)]
  \item\label{item:th:cgpsmall:banach} The functor
    \begin{equation*}
      \cat{BAlg}_{1,\le 1}
      \xrightarrow{\quad \Rep(\bG,-) \quad}
      \cat{CMet}
    \end{equation*}
    preserves filtered colimits.

  \item\label{item:th:cgpsmall:cast} The same goes for the functor \begin{equation*}
      \cC^*_1
      \xrightarrow{\quad \Rep^*(\bG,-) \quad}
      \cat{CMet},
    \end{equation*}
    where the asterisk denotes {\it unitary} representations. 
  \end{enumerate}  
\end{theorem}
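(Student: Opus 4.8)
The goal is, for a filtered colimit $A=\varinjlim_i A_i$ in $\cat{BAlg}_{1,\le 1}$ with structure maps $A_i\xrightarrow{\iota_i}A$, to show that the canonical comparison $\varinjlim_i\Rep(\bG,A_i)\to\Rep(\bG,A)$ is an isometric isomorphism, each $\Rep(\bG,-)$ carrying the uniform metric it inherits as a subspace of $\cat{Cont}(\bG,-)$. The underlying metric space of $A$ is the $\cat{CMet}$-colimit of those of the $A_i$ (both are the completion of the ordinary filtered colimit under the infimum metric), so \Cref{le:cpct2metric} applies with $K=\bG$: by its identity \Cref{eq:supinfsup}, the distance between $\varphi_i,\psi_i\in\Rep(\bG,A_i)$ in the colimit equals $\sup_g\|\iota_i\varphi_i(g)-\iota_i\psi_i(g)\|_A$, and since a uniform limit of representations is a representation, $\Rep(\bG,A)$ is closed in $\cat{Cont}(\bG,A)$ and contains the image of the completed colimit. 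Thus the comparison is an isometric embedding, and everything comes down to \textbf{surjectivity}: every $\varphi\in\Rep(\bG,A)$ is a uniform limit of composites $\iota_i\circ\rho_i$ with $\rho_i\in\Rep(\bG,A_i)$.

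Fix such a $\varphi$ and $\varepsilon>0$, and set $M:=\sup_g\max\{\|\varphi(g)\|,\|\varphi(g)^{-1}\|\}$, finite since $g\mapsto\varphi(g)^{-1}=\varphi(g^{-1})$ is continuous on the compact group $\bG$. \textbf{(a)} Viewing $\varphi$ as a continuous map into the Banach space $A$, \Cref{th:cpct2ban} (applied to the Banach spaces $A_i$) yields, for any prescribed $\delta>0$, an index $i$ and a continuous $\psi_i\colon\bG\to A_i$ with $\sup_g\|\iota_i\psi_i(g)-\varphi(g)\|_A<\delta$; replacing $\psi_i$ by $\psi_i+(1_{A_i}-\psi_i(1))$ and using that the $\iota$'s are unital, we may assume $\psi_i(1)=1_{A_i}$ at the cost of doubling $\delta$. \textbf{(b)} The multiplicativity defect $(g,h)\mapsto\psi_i(gh)-\psi_i(g)\psi_i(h)$ is a continuous map $\bG\times\bG\to A_i$, and since $\varphi$ is multiplicative a routine triangle-inequality estimate gives $\sup_{g,h}\|\iota_i(\psi_i(gh)-\psi_i(g)\psi_i(h))\|_A\le(1+2M)\delta$; applying \Cref{eq:supinfsup} once more, now with $K=\bG\times\bG$, produces $k\ge i$ with $\sup_{g,h}\|\iota_{ki}(\psi_i(gh)-\psi_i(g)\psi_i(h))\|_{A_k}\le(2+2M)\delta$. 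After relabelling we thus have a continuous unital almost-homomorphism $\psi_i\colon\bG\to A_i$ that is $\delta$-close to $\varphi$ in $A$, is multiplicative up to an error $c(M)\delta$ in $A_i$, and (taking $h=g^{-1}$, for $\delta$ small) is $A_i^\times$-valued. \textbf{(c)} Now invoke the Hyers--Ulam stability of representations of compact groups in Banach algebras, \cite[Proposition 4]{zbMATH03577502}: its radius of stability is governed purely by sup-bounds on $\psi_i$, which are pinned to $M$, so uniformly in $i$ one obtains an actual $\rho_i\in\Rep(\bG,A_i)$ with $\sup_g\|\psi_i(g)-\rho_i(g)\|_{A_i}$, hence $\sup_g\|\varphi(g)-\iota_i\rho_i(g)\|_A$, at most a quantity $c'(M)\delta$; choosing $\delta$ small enough finishes part~(1).

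For part~(2) I would deduce the unitary statement from part~(1) by an averaging--unitarization step. A filtered colimit $A=\varinjlim_i A_i$ in $\cC^*_1$ is also one in $\cat{BAlg}_{1,\le 1}$ (same colimit formula, and the $\iota_i$ are unital $*$-homomorphisms), so given $\varphi\in\Rep^*(\bG,A)$, part~(1) already supplies, for any $\eta>0$, an index $i$ and an ordinary $\rho_i\in\Rep(\bG,A_i^\times)$ with $\sup_g\|\iota_i\rho_i(g)-\varphi(g)\|_A<\eta$. Since $\iota_i$ is a $*$-homomorphism and $\varphi$ is unitary-valued, $\sup_g\|\iota_i(\rho_i(g)^*\rho_i(g)-1)\|_A$ is of order $\eta$; descending this over the compact set $\bG$ exactly as in (b), we may assume after enlarging $i$ that $S_i:=\int_{\bG}\rho_i(g)^*\rho_i(g)\,dg$, computed against normalized Haar measure in $A_i$, lies within $O(\eta)$ of $1_{A_i}$, hence is positive invertible with $S_i^{\pm 1/2}$ close to $1_{A_i}$. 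Right-invariance of Haar measure and multiplicativity of $\rho_i$ make $\wt\rho_i(g):=S_i^{1/2}\rho_i(g)S_i^{-1/2}$ a unitary representation in $A_i$, and $\sup_g\|\wt\rho_i(g)-\rho_i(g)\|_{A_i}=O(\eta)$ since $S_i^{\pm 1/2}\approx 1_{A_i}$ and $\|\rho_i(g)\|$ is uniformly bounded; thus $\iota_i\wt\rho_i$ is $O(\eta)$-close to $\varphi$, giving the needed surjectivity (the isometry half again immediate from \Cref{le:cpct2metric}). Alternatively one can simply rerun the argument of part~(1) in the $C^*$ setting with the unitary form of Hyers--Ulam stability.

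The delicate point, and the one deserving care in a full writeup, is the bookkeeping in step (b) and its analogue in the $C^*$ case: every estimate one derives lives a priori in the colimit $A$, where the norm is merely the infimum of the norms at finite stages, so each estimate has to be pushed down to a single $A_k$; and because the quantity being estimated (the multiplicativity defect, later $\rho_i^*\rho_i-1$) is a continuous function on a compact group rather than a lone vector, this descent is itself an instance of the compactness argument behind \Cref{le:cpct2metric}. Coupled with this is the requirement that the stability radius in the Hyers--Ulam input be independent of $i$, which is precisely why all the relevant norm bounds must be kept tied to the single constant $M$ attached to $\varphi$.
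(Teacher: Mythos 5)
Your part (1) is essentially the paper's argument: reduce to surjectivity via \Cref{le:cpct2metric}, approximate $\varphi$ by continuous maps $\bG\to A_i$ using \Cref{th:cpct2ban}, push the multiplicativity defect down to a finite stage by the compactness/descent identity \Cref{eq:supinfsup} applied over $\bG^2$, and then invoke de la Harpe--Karoubi stability with constants controlled only by $M=\sup_g\max\{\|\varphi(g)\|,\|\varphi(g)^{-1}\|\}$, hence uniformly in $i$. Your explicit insistence that the stability radius be tied to $M$ alone is exactly the right thing to worry about, and your bookkeeping of where each estimate lives ($A$ versus $A_k$) is, if anything, more careful than the paper's.

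For part (2) you genuinely diverge. The paper stays inside the stability machinery: it observes that unitary representations are the self-adjoint elements for the involution $\varphi^*(s)=\varphi(s^{-1})^*$ on $\cC(\bG,A)$, replaces the approximants by their real parts $\tfrac{\varphi_i+\varphi_i^*}{2}$, and reruns the proof of the de la Harpe--Karoubi proposition while enforcing self-adjointness throughout. You instead post-process the output of part (1): take an ordinary representation $\rho_i$ close to the unitary $\varphi$, form $S_i=\int_\bG\rho_i(g)^*\rho_i(g)\,dg$ against Haar measure, and conjugate by $S_i^{1/2}$ to unitarize, with $S_i^{\pm1/2}$ close to $1$ guaranteeing the perturbation is $O(\eta)$. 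The computation $\rho_i(h)^*S_i\rho_i(h)=S_i$ via right-invariance does make $S_i^{1/2}\rho_i(\cdot)S_i^{-1/2}$ unitary-valued, and the descent of $\|\rho_i(g)^*\rho_i(g)-1\|$ to a finite stage is again \Cref{eq:supinfsup}, so the argument is correct; the one point you rightly flag implicitly is that the $\cC^*_1$-colimit and the $\cat{BAlg}_{1,\le 1}$-colimit of the same diagram have the same underlying Banach algebra, which is what lets part (1) be quoted verbatim. Your route has the advantage of treating part (1) as a black box (no need to re-open the stability proof), at the cost of the averaging/unitarization lemma; the paper's route keeps everything self-adjoint from the start and so needs no separate unitarization, but requires inspecting the internals of the cited stability result.
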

\begin{proof}
  \Cref{le:cpct2metric} again frees us to focus on the surjectivity of the map
  \begin{equation*}
      \varinjlim_i \Rep(\bG,A_i)
      \lhook\joinrel\xrightarrow{\quad}
      \Rep\left(\bG,\varinjlim_i A_i\right)
      =
      \Rep(\bG,A),
    \end{equation*}
    as we will throughout the proof, for a colimit
    \begin{equation*}
      A_i\xrightarrow{\quad\iota_i\quad}A=\varinjlim_i A_i\text{ in the appropriate category}.
    \end{equation*}
    
  \begin{enumerate}[(1), wide=0pt]
  \item\label{item:th:cgpsmall:banach-pf} Given a representation $\bG\xrightarrow{\varphi}A$, \Cref{th:cpct2ban} provides {\it maps} $\bG\xrightarrow{\varphi_i}A_i$ (not necessarily representations, a priori) with $\bG\xrightarrow{\iota_i\varphi_i}A$ close to $\varphi$.

    % % We are assuming that $A=\varinjlim_i A_i$ in $\cat{BAlg}_1$ and hence also in $\cat{CMet}$. A morphism $\bG\xrightarrow{\varphi} A^{\times}$ is then arbitrarily uniformly approximable by continuous {\it maps} (not necessarily morphisms, a priori) $\bG\xrightarrow{\varphi_i} A_i$. To see this, consider, for each $i$ and fixed $\varepsilon>0$, the set-valued map (or {\it correspondence} \cite[Definition 17.1]{zbMATH05265624})
    % % \begin{equation*}
    % %   \bG\ni s
    % %   \xmapsto{\quad\Phi_{\varepsilon,i}\quad}
    % %   \{a\in A_i\ |\ \|\varphi(s)-a\|\le\varepsilon\}
    % %   \subset A_i.
    % % \end{equation*}
    % % Each $\Phi_{\varepsilon,i}$ takes closed convex values, all non-empty if $i$ is sufficiently large (no matter how small an $\varepsilon>0$ we choose and fix to begin with). The usual triangle-inequality argument also shows that $\Phi_{\varepsilon,i}$ are {\it lower hemicontinuous} \cite[Definition 17.2]{zbMATH05265624}: for every open $U\subseteq A_i$,
    % % \begin{equation*}
    % %   \left\{s\in\bG\ |\ \Phi_{\varepsilon,i}(s)\cap U\ne\emptyset\right\}\text{ is open}.
    % % \end{equation*}
    % % {\it Michael selection} (\cite[Theorem 17.66]{zbMATH05265624} or \cite[Theorem 1.1]{zbMATH06329568}) then shows that there is a continuous map
    % % \begin{equation*}
    % %   \bG\ni s
    % %   \xmapsto{\quad}
    % %   \varphi_i(s)\in \Phi_{\varepsilon,i}\subset A_i
    % % \end{equation*}
    % % for sufficiently large $i$ (again, for arbitrarily small $\varepsilon>0$ chosen initially). 
    % % 
    
    I now claim further that for arbitrary $\delta>0$, we can make $\varphi_i$ (large $i$) $\delta$-close to being morphisms, in the sense \cite[D\'efinition 1]{zbMATH03577502} that
    \begin{equation}\label{eq:phiiapproxmult}
      \sup_{s,t\in \bG}\left\|\varphi_i(st)-\varphi_i(s)\varphi_i(t)\right\|<\delta. 
    \end{equation}
    Indeed, a sufficiently small $\varepsilon$ in $\|\iota_i\varphi_i-\varphi\|<\varepsilon$ (and the assumed multiplicativity of $\varphi$) will ensure that the binary function
    \begin{equation}\label{eq:binfuncveci}
      \bG^2\ni (s,t)
      \xmapsto{\quad}
      \iota_i\varphi_i(st)-\iota_i\varphi(s)\iota_i\varphi_i(t)
    \end{equation}
    takes its values in the $\frac{\delta}{2}$-ball of $A$, whence
    \begin{equation}\label{eq:binfuncnrji}
      \|\iota_{ji}\varphi_i(st)-\iota_{ji}\varphi(s)\iota_{ji}\varphi_{ji}(t)\|<\delta,\quad \forall s,t\in \bG^2
    \end{equation}
    for large $j\ge i$: the left-hand side of \Cref{eq:binfuncnrji} converges uniformly (on $\bG^2$) to the norm of \Cref{eq:binfuncveci}. Now simply substitute $\bG\xrightarrow{\iota_{ji}\varphi_i} A_j$  for $\varphi_i$. 
    
    \Cref{eq:phiiapproxmult} in hand, \cite[Proposition 4]{zbMATH03577502}, whose metric estimates apply uniformly in $i$ to the $A_i$ embedded isometrically in the ambient $A$, ensures the existence of a {\it representation} $\bG\to A_i$ $\varepsilon$-close to $\varphi_i$ for arbitrarily small $\varepsilon$ provided $\delta>0$ was chosen sufficiently small to begin with. 
    
  \item We now have $*$-structures both on the algebras $A$ and $A_i$ and the spaces $\cC(\bG,-)$ of continuous maps $\bG\xrightarrow{}A$:
    \begin{equation*}
      \cC(\bG,A)\ni \varphi
      \xmapsto{\quad}
      \varphi^*
      :=
      \left(\bG\ni s\xmapsto{\quad} \varphi(s^{-1})^*\in A\right),
    \end{equation*}
    as familiar in the context of $*$-algebras attached to (locally compact) groups (\cite[\S 1.4]{mart-proj}, \cite[Example VI.1.2]{tak2}, \cite[\S II.10.1.3]{blk}, \cite[\S 7.1.2]{ped-aut}, etc.).

    The unitary representations of $\bG$ into a $C^*$-algebra are precisely the self-adjoint ones for said $*$-structure on $\Rep(\bG,A)$ (indeed, this is what justifies the notation $\Rep^*(\bG,A)$ for the space of unitary representations). The argument in part \Cref{item:th:cgpsmall:banach-pf} simply goes through, taking care to work with self-adjoint maps throughout. When first approximating $\varphi\in \Rep^*(\bG,A)$ with $\varphi_i$, for instance, one can always replace an arbitrary $\varphi_i$ with its ``real part'' $\frac{\varphi_i+\varphi_i^*}2$ to ensure self-adjointness and then run through (the proof of) \cite[Proposition 4]{zbMATH03577502} similarly enforcing self-adjointness.
  \end{enumerate}
  This completes the proof as a whole. 
\end{proof}

There is an algebra-representation version of \Cref{th:cgpsmall}, the second part of which generalizes \cite[Proposition 3.5]{zbMATH07595194} (from filtered colimits of $C^*$ {\it embeddings} to filtered colimits period). Unbeknownst to us at the time, \cite[Proposition 3.5]{zbMATH07595194} is also (essentially) \cite[Lemma 2.3]{zbMATH03416517}: the latter is phrased in terms of colimits of embedding {\it sequences} $A_1\lhook\joinrel\xrightarrow{}A_2\lhook\joinrel\xrightarrow{}\cdots$, but the proof goes through virtually unchanged for arbitrary filtered colimits.

\begin{theorem}\label{th:ssalgsmall}
  \begin{enumerate}[(1)]
  \item\label{item:th:ssalgsmall:ban} For a semisimple unital Banach algebra $B\in\cat{BAlg}_1$ the functor 
    \begin{equation*}
      \cat{BAlg}_{1,\le 1}
      \xrightarrow{\quad\cat{BAlg}_1(B,-) \quad}
      \cat{CMet}
    \end{equation*}
    preserves filtered colimits.
    
  \item\label{item:th:ssalgsmall:cast} The same goes for the functor
    \begin{equation*}
      \cC^*_1
      \xrightarrow{\quad\cC^*_1(B,-) \quad}
      \cat{CMet}
    \end{equation*}
    if $B\in \cC^*_1$ is a finite-dimensional $C^*$-algebra.
  \end{enumerate}
\end{theorem}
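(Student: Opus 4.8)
The plan is to run the three-step scheme $(a)$–$(c)$ of \Cref{th:cgpsmall} once more, with the Haar-averaging input \cite[Proposition 4]{zbMATH03577502} replaced by the Banach-algebra stability estimate \cite[Theorem 3.1]{zbMATH04063807}. The latter applies because $B$ carries a \emph{diagonal} (separability idempotent) $e\in B\otimes B$: finite-dimensional and semisimple, $B$ is by Wedderburn a finite product of full matrix algebras (over $\bR$, $\bC$ or $\bH$ in general, over $\bC$ in the $C^*$ case), and such products carry obvious diagonals. Fix a filtered colimit $A_i\xrightarrow{\iota_i}A=\varinjlim_iA_i$ in $\cat{BAlg}_{1,\le 1}$ (resp.\ $\cC^*_1$), computed as the completion of the algebraic colimit, so that the colimit norm of $\iota_i(a)$ equals the infimum of the norms of the images of $a$ further along the system.

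First I would reduce to a density statement. Since $B$ is finite-dimensional its closed unit ball $K\subset B$ is compact, and $\cat{BAlg}_1(B,-)$ sits isometrically and colimit-compatibly inside $\cat{Cont}(K,-)$, so \Cref{le:cpct2metric} shows that the canonical comparison map $\varinjlim_i\cat{BAlg}_1(B,A_i)\to\cat{BAlg}_1(B,A)$ is an isometric embedding (and likewise for $\cC^*_1(B,-)$). Its source being an object of $\cat{CMet}$, the image is closed, so it remains only to prove it dense: every unital homomorphism (resp.\ unital $*$-homomorphism) $\varphi\colon B\to A$ is uniformly approximable on $K$ by homomorphisms $B\to A_i$.

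Now the approximation. \emph{Step $(a)$} (linear approximation): choose a linear basis $1=b_0,b_1,\dots,b_n$ of $B$ through the unit. As each $\varphi(b_k)$ lies in $A=\varinjlim_iA_i$ and the colimit norm is an infimum of norms along the system, for large $i$ one can select $a_k\in A_i$ with $a_0=1_{A_i}$, with $\|\iota_ia_k-\varphi(b_k)\|$ as small as desired, and with $\|a_k\|_{A_i}$ close to $\|\varphi(b_k)\|$; the resulting unital linear map $\varphi_i\colon B\to A_i$, $b_k\mapsto a_k$, then satisfies $\|\iota_i\varphi_i-\varphi\|<\varepsilon$ and $\sup_i\|\varphi_i\|\le C_{B,\varphi}$, finite-dimensionality of $B$ being what converts the basiswise bounds into operator-norm ones. (Equivalently one invokes the metric finite presentability of the Banach space underlying $B$, or linearizes the continuous approximation provided by \Cref{th:cpct2ban}.) \emph{Step $(b)$} (approximate multiplicativity): imitating the same step for \Cref{th:cgpsmall}\Cref{item:th:cgpsmall:banach}, for $s,t\in K$ (so $st\in K$) multiplicativity of $\varphi$ together with Step $(a)$ make $\iota_i\bigl(\varphi_i(st)-\varphi_i(s)\varphi_i(t)\bigr)$ uniformly small over the compact set $K\times K$; applying \Cref{le:cpct2metric} to the continuous map $(s,t)\mapsto\varphi_i(st)-\varphi_i(s)\varphi_i(t)$ into $A_i$ yields $j\ge i$ with $\sup_{s,t\in K}\bigl\|\iota_{ji}\varphi_i(st)-\iota_{ji}\varphi_i(s)\,\iota_{ji}\varphi_i(t)\bigr\|<\delta$, and passing from $\varphi_i$ to $\iota_{ji}\varphi_i\colon B\to A_j$ (which leaves $\iota_j\varphi_i=\iota_i\varphi_i$ and the norm bound untouched) renders $\varphi_i$ $\delta$-close to being a homomorphism while still uniformly close to $\varphi$.

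\emph{Step $(c)$} (stabilization): with $\varphi_i$ now $\delta$-close to a homomorphism and of norm bounded uniformly in $i$, \cite[Theorem 3.1]{zbMATH04063807} — whose error bound depends only on a fixed diagonal for $B$ and an upper bound for $\|\varphi_i\|$, hence applies uniformly across the system — produces, once $\delta$ is small enough, an honest unital homomorphism $B\to A_j$ uniformly $\varepsilon$-close to $\varphi_i$ and therefore to $\varphi$. Letting $\varepsilon\downarrow0$ yields density and with it part \Cref{item:th:ssalgsmall:ban}. Part \Cref{item:th:ssalgsmall:cast} is handled by carrying self-adjoint maps throughout — replacing $\varphi_i$ by $\tfrac{\varphi_i+\varphi_i^*}{2}$ where needed and appealing to the self-adjoint form of \cite[Theorem 3.1]{zbMATH04063807} — exactly as in the passage from \Cref{item:th:cgpsmall:banach} to \Cref{item:th:cgpsmall:cast} of \Cref{th:cgpsmall}; alternatively, \Cref{item:th:ssalgsmall:cast} follows from the semiprojectivity of finite-dimensional $C^*$-algebras, sharpening \cite[Proposition 3.5]{zbMATH07595194} (equivalently \cite[Lemma 2.3]{zbMATH03416517}) from colimits of embeddings to arbitrary filtered colimits. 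As for \Cref{th:cgpsmall}, the crux is the \emph{uniformity in $i$} of the Step-$(c)$ estimate; this is exactly what dictates the choice of norm-realizing representatives $a_k$ in Step $(a)$ (keeping $\sup_i\|\varphi_i\|$ finite) and what makes \Cref{le:cpct2metric} indispensable in Step $(b)$ for reconciling multiplicativity computed in $A$ with that computed in $A_j$.
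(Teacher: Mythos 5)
Your proposal is correct, but it is worth pointing out that it reproduces what the paper presents as the \emph{alternative} proof of \Cref{th:ssalgsmall} rather than the primary one. The paper's first proof does not invoke \cite[Theorem 3.1]{zbMATH04063807} at all: it uses Wedderburn only to produce a compact subgroup $\bG\le B^{\times}$ spanning $B$, restricts $\varphi$ to $\bG$, applies \Cref{th:cgpsmall} to approximate $\varphi|_{\bG}$ by representations $\bG\to A_i$, and then transfers the linear-extendability of $\varphi|_{\bG}$ to the approximants by observing that sufficiently close representations are conjugate (by $A^{\times}$ via \cite[Corollaire to Proposition 4]{zbMATH03577502}, resp.\ by $U(A)$ via \cite[Proposition 1.7]{mart-proj}). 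That route is shorter once \Cref{th:cgpsmall} is in hand, but makes the two theorems interdependent; your direct route via the diagonal $e\in B\otimes B$ and Johnson's Newton-type iteration is self-contained modulo \cite[Theorem 3.1]{zbMATH04063807} and makes the uniformity of the $(\varepsilon,\delta)$-estimates across the system completely explicit, which is the crux. Two smaller remarks: your Step $(a)$ (basis through the unit plus the infimum description of the colimit norm, with finite-dimensionality converting coordinatewise bounds into a uniform bound on $\|\varphi_i\|$) is a hands-on substitute for the paper's citation of \cite[Proposition 7.6]{zbMATH07461229}, and is fine; and, like the paper, you silently read ``semisimple'' in part \Cref{item:th:ssalgsmall:ban} as ``finite-dimensional semisimple'' (Wedderburn and the existence of a diagonal both require this), which is consistent with the paper's intended scope.
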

\begin{proof}
  We deduce the two statements from their respective compact-group counterparts in \Cref{th:cgpsmall}. And as in that earlier result, it is enough to argue that a morphism
  \begin{equation}\label{eq:btolimai}
    B\xrightarrow{\quad\varphi\quad}A=\varinjlim_i A_i
  \end{equation}
  into a filtered colimit of isometric embeddings is approximable by morphisms into the individual $A_i$. 
  
  Because $B$ is a finite-dimensional semisimple real algebra, it is a finite product of matrix algebras over $\bR$, the quaternion division ring $\bH$, or $\bC$ \cite[Theorems 3.5 and 13.12]{lam}. It follows from this that $B$ is spanned by a compact subgroup $\bG\le B^{\times}$ (indeed, this {\it characterizes} semisimple real algebras among finite-dimensional ones \cite[Lemma 3.8]{cp-us}). From \Cref{th:cgpsmall} we have approximability of
  \begin{equation}\label{eq:phiphii}
    \bG \xrightarrow{\quad\varphi|_{\bG}\quad} A
    \quad\text{by}\quad
    \bG \xrightarrow{\quad\varphi_i\quad} A_i,
  \end{equation}
  so it will be enough to argue that said $\varphi_i$ extend across to $B$ linearly. Since $\varphi|_{\bG}$ does so extend by assumption, the conclusion follows from the fact that being sufficiently close, the two maps \Cref{eq:phiphii} are mutual conjugates: by $A^{\times}$ \cite[Corollaire to Proposition 4]{zbMATH03577502} in \Cref{item:th:ssalgsmall:ban} and by the unitary group $U(A)$ \cite[Proposition 1.7]{mart-proj} in \Cref{item:th:ssalgsmall:cast}.
\end{proof}

Deducing \Cref{th:ssalgsmall} from \Cref{th:cgpsmall} was a matter of choice rather than necessity. To emphasize the ties between the two branches of the discussion, it is perhaps worth noting that the roles can be reversed, proving the former {\it first} and {\it deducing} the compact-group version. 

\altpf{pf:th:ssalgsmall:alt}{Alternative proof of \Cref{th:ssalgsmall}}
\begin{pf:th:ssalgsmall:alt}
  We once more take it for granted that what needs proving is the arbitrary approximability (in the norm of $\cL(B,A)$) by morphisms $B\to A_i$. 
  
  The argument will parallel that in the proof of \Cref{th:cgpsmall}, applying the ``Newton-approximation'' method in \cite[proof of Theorem 3.1 and subsequent discussion]{zbMATH04063807} to first push $\varphi$ to a linear map $B\to A_i$ close to being a morphism, and then applying said approximation method to further deform into an algebra morphism.

  \begin{enumerate}[(I),wide=0pt]
  \item {\bf : Approximating algebra morphisms $B\to A$ by linear maps $B\to A_i$.} This is indeed possible to do arbitrarily well: for sufficiently large $i$, some linear $B\xrightarrow{\varphi_i}A_i$ will be $\delta'$-close to $B\xrightarrow{\varphi}A$ no matter how small $\delta'$ is.

    In metric-enriched-category language the claim is that of {\it approximate $\aleph_0$-generation} \cite[Deﬁnition 5.16]{zbMATH07461229} of finite-dimensional Banach spaces with respect to isometries, and it is proven in \cite[Proposition 7.6]{zbMATH07461229} (in its formally stronger form, in $\cat{Ban}_{\le 1}$ rather than $\cat{Ban}$).
    
    Furthermore, in case \Cref{item:th:ssalgsmall:cast} of the theorem we can assume the approximating $\varphi_i$ are self-adjoint (because $\varphi$ was) for the usual $*$-structure
    \begin{equation*}
      \psi^*(b):=\psi(b^*)^*\text{ for $b\in B$ and linear $B\xrightarrow{\psi}A$ or $A_i$}
    \end{equation*}
    by simply replacing $\varphi_i$ with their real parts $\frac{\varphi_i+\varphi_i^*}2$.
        
  \item {\bf : Further deforming into algebra morphisms.} Being $\delta'$-close to the original $\varphi$, a linear map $B\xrightarrow{\ \psi=\varphi_i\ }A_i$ fixed throughout the rest of the proof can be assumed $\delta$-close to being multiplicative as in \cite[\S 1]{zbMATH04063807} for arbitrarily small $\delta$: $\|\widecheck{T}\|<\delta$ in the usual sense of norming {\it multi}linear maps \cite[\S A.3, following Theorem A.3.35]{dales}, where
    \begin{equation*}
      \widecheck{T}(x,y):=T(xy)-T(x)t(y),\quad x,y\in B. 
    \end{equation*}
    Being (finite-dimensional) semisimple $B$ has (\cite[Theorem 1.9.21]{dales}, \cite[Theorem 7.2.1]{dael}) a {\it diagonal} \cite[\S 7.2]{dael} (which can be chosen self-adjoint in the $C^*$ case, by simply substituting $\frac{e+e^*}2$): an element
    \begin{equation*}
      e=\sum_\ell e'_\ell\otimes e''_\ell\in B\otimes B,\quad
      B\otimes B\ni e\xmapsto{\text{multiplication}}1\in B
      \quad\text{and}\quad
      be=eb,\ \forall b\in B.
    \end{equation*}
    The aforementioned approximation technique of \cite[Theorem 3.1]{zbMATH04063807} then proceeds by substituting for $\psi$ a series
    \begin{equation}\label{eq:psiser}
      (\psi_0:=\psi)+\psi_1+\psi_2+\cdots = \lim_n \left(\Psi_n:=\psi_0+\cdots+\psi_n\right),
    \end{equation}
    with each new term $\psi_{n+1}$ recoverable from the previous partial sum $\Psi_n$ by ``averaging'' against the integral $e$:
    \begin{equation*}
      \psi_{n+1}:= \sum_\ell \Psi_n(e'_{\ell})\widecheck{\Psi_n}(e''_{\ell},-),\quad\text{or}\quad      
      \begin{tikzpicture}[auto,baseline=(current  bounding  box.center)]
        \path[anchor=base] 
        (0,0) node (l) {$B$}
        +(2,.5) node (u) {$B^{\otimes 3}$}
        +(4,0) node (r) {$A_i$.}
        ;
        \draw[->] (l) to[bend left=6] node[pos=.5,auto] {$\scriptstyle e\otimes-$} (u);
        \draw[->] (u) to[bend left=6] node[pos=.5,auto] {$\scriptstyle \Psi_n\otimes \widecheck{\Psi_n}$} (r);
        \draw[->] (l) to[bend right=6] node[pos=.5,auto,swap] {$\scriptstyle \psi_{n+1}$} (r);
      \end{tikzpicture}
    \end{equation*}
    That argument functions to produce {\it (Banach or $C^*$-)algebra} morphisms \Cref{eq:psiser} $\varepsilon$-close to the original merely linear $\psi_0=\psi$, provided only the $\delta>0$ was sufficiently small to begin with; the estimates in question ($\varepsilon$, $\delta$, etc.) can be chosen uniformly no matter which $A_i$ we work in.
  \end{enumerate}
\end{pf:th:ssalgsmall:alt}

As for the announced implication reversal between the two theorems:

\altpf{pf:th:cgpsmall:alt}{Proof of \Cref{th:cgpsmall} assuming \Cref{th:ssalgsmall}}
\begin{pf:th:cgpsmall:alt}
  More precisely, parts \Cref{item:th:cgpsmall:banach} and \Cref{item:th:cgpsmall:cast} of \Cref{th:cgpsmall} reduce to their respective counterparts in \Cref{th:ssalgsmall} once we observe that the given representation $\bG\to A$ (unitary in \Cref{item:th:cgpsmall:cast}) factors through a finite-dimensional semisimple Banach (or $C^*$) quotient of the {\it convolution algebra} \cite[pp.53-55]{rob} $L^1(\bG)$. 

  This last claim, in turn, follows from the fact that $\bG\xrightarrow{\varphi}A^{\times}$ induces a {\it norm-}continuous representation of $\bG$ on
  \begin{itemize}
  \item the Banach space $A$ in \Cref{item:th:cgpsmall:banach}, with $A^{\times}$ acting thereon by left multiplication;
  \item or a Hilbert space $H$ for $A$ embedded as a closed $*$-subalgebra of $\cL(H)$ in \Cref{item:th:cgpsmall:cast}.
  \end{itemize}
  Said norm continuity implies (by \cite[Theorem 2.10]{2310.15139v1} in general, also \cite[p.257]{zbMATH03289866} for the unitary case) that those representations have finitely many isotypic components, hence the conclusion. 
\end{pf:th:cgpsmall:alt}

\begin{remarks}\label{res:semiproj}
  \begin{enumerate}[(1),wide=0pt]
  \item\label{item:res:semiproj:semiproj} \cite[Corollary 2.30]{zbMATH03996430} shows in particular that finite-dimensional $C^*$-algebras are {\it semiprojective} (\cite[Definition 2.10]{zbMATH03996430}, \cite[Definition II.8.3.7]{blk}). In the language of the present paper, this would be phrased as the requirement that
    \begin{equation*}
      \cC^*_1\xrightarrow{\quad \cC^*_1(B,-)\quad}\cat{Set}
    \end{equation*}
    preserve colimits of {\it surjection sequences}, with the entire discussion typically restricted to {\it separable} $C^*$-algebras \cite[\S II.8.3.9]{blk}.

    The arguments delivering \cite[Corollary 2.30]{zbMATH03996430} would presumably also go through for arbitrary filtered colimits; in any case, that result is a kind of opposite end of the spectrum to \cite[Proposition 3.5]{zbMATH07595194}: where the latter is concerned with embeddings, the former deals with surjections instead. \Cref{th:ssalgsmall}\Cref{item:th:ssalgsmall:cast} (or rather \Cref{th:liftepi}\Cref{item:th:liftepi:cast}) encompasses both (for {\it finite}-dimensional $B$; \cite[Corollary 2.30]{zbMATH03996430} proves semiprojectivity for wider classes of $C^*$-algebras).
    
  \item\label{item:res:semiproj:proj} There is also the stronger notion \cite[Definition 2.1]{zbMATH03996430} of {\it projectivity} for $B\in \cC^*_1$: the property that morphisms with domain $B$ lift along surjections.  

    As the name suggests, this is equivalent to category-theoretic projectivity \cite[\S 9.27 and Definition 9.1]{ahs}: the property that morphisms with the respective domain lift along {\it epimorphisms} \cite[Definition 7.39]{ahs}. Indeed, the epimorphisms in $\cC^*_1$ are precisely the surjections \cite[Proposition 2]{zbMATH03303492}.

    Projectivity truly is the stronger notion. $\cC^*_1$-morphisms out of $\bC^2$ simply pick out projections and, as noted (say) in \cite[paragraph preceding Proposition 2.5]{zbMATH02131689}, projections do not generally lift along surjections (consider e.g. the surjection dual to the inclusion of a disconnected compact Hausdorff space into a connected one). $\bC^2$ is thus not projective, but it is semiprojective by \cite[Proposition 2.18.]{zbMATH03996430}.
  \end{enumerate}
\end{remarks}

\Cref{res:semiproj}\Cref{item:res:semiproj:semiproj} suggests natural augmentations to \Cref{th:cgpsmall,th:ssalgsmall}: actual lifts (as opposed to just {\it approximate} lifts) when the connecting maps of the relevant colimits are surjections. We record the result here, with part \Cref{item:th:liftepi:cast} recovering (the finite-dimensional branch of) \cite[Corollary 2.30]{zbMATH03996430} by different means, and divorced from some of the unnecessary assumptions such as separability and the assumption that the colimits in question are along chained {\it sequences} of surjections. 

\begin{theorem}\label{th:liftepi}
  \begin{enumerate}[(1)]
  \item\label{item:th:liftepi:ban} The functors  
    \begin{equation*}
      \cat{BAlg}_{1,\le 1}
      \xrightarrow{\quad \Rep(\bG,-) \quad}
      \cat{Set}
      ,\quad
      \text{$\bG$ a compact group}
    \end{equation*}
    and
    \begin{equation*}
      \cat{BAlg}_{1,\le 1}
      \xrightarrow{\quad\cat{BAlg}_1(B,-) \quad}
      \cat{Set}
      ,\quad
      \text{$B\in\cat{BAlg}$ semisimple}.
    \end{equation*}
    preserve filtered colimits of surjective morphisms. 
  \item\label{item:th:liftepi:cast} The analogous statements hold in the $C^*$ setting, for
    \begin{equation*}
      \cC^*_1
      \xrightarrow{\quad \Rep^*(\bG,-) \quad}
      \cat{Set}
      ,\quad
      \text{$\bG$ a compact group}
    \end{equation*}
    and
    \begin{equation*}
      \cC^*_1
      \xrightarrow{\quad\cC^*_1(B,-) \quad}
      \cat{Set}
      ,\quad
      \text{$B\in\cC^*_1$ semisimple}.
    \end{equation*}
  \end{enumerate}
\end{theorem}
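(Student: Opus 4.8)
The plan is to run the alternative proof of \Cref{th:ssalgsmall} with one enhancement: the surjectivity of the connecting maps lets one replace its \emph{approximate} linear lift by an \emph{exact} one, after which the Newton/diagonal-averaging deformation of \cite[Theorem 3.1]{zbMATH04063807} can be arranged to land on the given morphism on the nose rather than merely near it. First the group statements reduce to the algebra statements: a (unitary) representation $\bG\to A$ factors through a morphism $B\to A$ with $B$ a finite-dimensional semisimple (resp.\ finite-dimensional $C^*$) quotient of $L^1(\bG)$, exactly as in the proof of \Cref{th:cgpsmall} deduced from \Cref{th:ssalgsmall}, and an exact lift $B\to A_i$ composes with $\bG\to B^{\times}$ to an exact lift of the representation. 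So it suffices to lift a morphism $\varphi\colon B\to A=\varinjlim_i A_i$ on the nose to some $A_i$, for $B$ finite-dimensional semisimple (resp.\ finite-dimensional $C^*$) and all connecting maps surjective; this exact-lifting statement is the content here, the approximate version having been settled in \Cref{th:cgpsmall,th:ssalgsmall}.

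The first substantive step is that surjectivity of the connecting maps $\iota_{ji}$ forces the structure maps $\iota_i\colon A_i\to A$ to be surjective too: all the images $\iota_i(A_i)\subseteq A$ coincide (any two factor through a common refinement), form a dense subalgebra, and, being a quotient of a Banach algebra by a closed ideal, are complete, hence all of $A$; one also gets $\ker\iota_i=\overline{\bigcup_{j\ge i}\ker\iota_{ji}}$. In $\cC^*_1$ this is automatic because surjective $*$-homomorphisms are isometric quotient maps; in $\cat{BAlg}_{1,\le 1}$ it works once ``surjective'' is read as ``metric quotient'' (otherwise one must track the open-mapping constants), and this is the one spot where the Banach case is a shade more delicate than the $C^*$ one.

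Next, lift a linear basis of $B$ through $\iota_i$ --- self-adjointly in the $C^*$ case --- to get a linear (and, in the $C^*$ case, $*$-preserving) map $\psi_0\colon B\to A_i$ with $\iota_i\psi_0=\varphi$ exactly. Its multiplicative defect $\widecheck{\psi_0}$ is then valued in $\ker\iota_i=\overline{\bigcup_j\ker\iota_{ji}}$, so, $B$ being finite-dimensional, passing to a later stage $j$ makes $\|\widecheck{\iota_{ji}\psi_0}\|$ arbitrarily small. Feeding $\psi:=\iota_{ji}\psi_0\colon B\to A_j$ into \cite[Theorem 3.1]{zbMATH04063807}, the series $\psi+\psi_1+\psi_2+\cdots$ --- with $\psi_{n+1}$ built from the partial sum $\Psi_n$ by averaging $\Psi_n\otimes\widecheck{\Psi_n}$ against a diagonal of $B$ (self-adjoint in the $C^*$ case) --- converges, once that defect is small enough, to a genuine Banach (resp.\ $C^*$) algebra morphism $\widetilde{\psi}\colon B\to A_j$, with the estimates uniform in $j$ just as in \Cref{th:cgpsmall}.

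The crux, and what I expect to need the most care, is that this deformation stays inside the fibre over $\varphi$. Because $\iota_j\psi=\varphi$ is already multiplicative, $\iota_j\widecheck{\psi}=\widecheck{\varphi}=0$, and inductively $\iota_j\Psi_n=\varphi$ and $\iota_j\widecheck{\Psi_n}=0$, so every correction term has $\iota_j\psi_{n+1}=0$ and hence $\iota_j(\lim_n\Psi_n)=\varphi$. Thus $\widetilde{\psi}$ is not merely close to but an exact lift of $\varphi$, which is what was wanted. Together with the surjectivity of $\iota_i$ above, this in-fibre bookkeeping is precisely what promotes the approximate lifting of \Cref{th:cgpsmall,th:ssalgsmall} to genuine lifting; the Hyers--Ulam estimates themselves are imported verbatim.
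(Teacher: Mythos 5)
Your proof is correct in substance but takes a genuinely different route from the paper's. The paper first invokes \Cref{th:cgpsmall} to produce an honest representation $\varphi_i\colon\bG\to A_i^{\times}$ with $\iota_i\varphi_i$ uniformly close to $\varphi$, then uses the Hyers--Ulam \emph{uniqueness} statement \cite[Corollaire to Proposition 4]{zbMATH03577502} (resp.\ \cite[Proposition 1.7]{mart-proj}) to write $\iota_i\varphi_i=u\varphi u^{-1}$ with $u\in A^{\times}$ (resp.\ a unitary) close to $1$, lifts the \emph{single element} $u$ through the surjection, pushes forward to a stage $j$ where the lift is still invertible, and conjugates back; it also reduces in the opposite direction to yours (algebras to groups rather than groups to algebras), though it notes both reductions work. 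Your argument instead lifts a linear basis of $B$ exactly, shrinks the multiplicative defect by pushing along the diagram, and observes that Johnson's Newton iteration never leaves the fibre $\iota_j^{-1}(\varphi)$ because each correction $\psi_{n+1}$ takes values in the ideal $\ker\iota_j$. That in-fibre observation is the real novelty and is correct: $\iota_j\widecheck{\Psi_n}=\widecheck{\iota_j\Psi_n}=\widecheck{\varphi}=0$ inductively, and $A_j\cdot\ker\iota_j\subseteq\ker\iota_j$. What each approach buys: the paper's is shorter because it reuses \Cref{th:cgpsmall} as a black box and only needs to lift one invertible; yours is more self-contained (it does not need the approximate-lifting theorems at all for the algebra case, only Johnson's iteration plus the fibre bookkeeping) and arguably explains \emph{why} exactness is achievable. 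Two small points. First, both proofs need surjectivity of the structure maps $\iota_i\colon A_i\to A$, not just of the connecting maps; this is automatic for $C^*$-algebras and silently assumed in the paper's own proof for Banach algebras, so your explicit flagging of the metric-quotient/open-mapping-constant issue is if anything more careful than the source. Second, your identity $\ker\iota_i=\overline{\bigcup_{j\ge i}\ker\iota_{ji}}$ is more than you need and is itself sensitive to the same quotient-norm subtlety; all your argument actually uses is that $\|\iota_{ji}v\|_{A_j}\searrow\|\iota_iv\|_A=0$ for $v\in\ker\iota_i$, which is immediate from the construction of filtered colimits in $\cat{Ban}_{\le 1}$, together with finite-dimensionality of $B\otimes B$ and filteredness to get a single $j$ working for a basis. (Neither you nor the paper addresses injectivity of the comparison map of $\cat{Set}$-colimits, so that is not a defect of your proposal relative to the source.)
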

\begin{proof}
  The back-and-forth passage between the group and algebra pictures is easily effected, as in the alternative proofs of \Cref{th:cgpsmall,th:ssalgsmall}, each relying on the other. It will thus be enough (and convenient, for the purpose of fixing the notation) to focus on the group versions.

  Consider, then, a representation $\bG\xrightarrow{\varphi}A^{\times}$ for a filtered colimit
  \begin{equation*}
    A_i\xrightarrowdbl[\text{onto}]{\quad\iota_i\quad} A:=\varinjlim_i A_i.
  \end{equation*}
  \Cref{th:cgpsmall} provides representations $\bG\xrightarrow{\varphi_i}A$ with $\iota_i\varphi$ $\varepsilon$-close to $\varphi$ for arbitrarily small $\varepsilon>0$. Now, by \cite[Corollaire to Proposition 4]{zbMATH03577502} (or rather its proof via \cite[Lemme 7]{zbMATH03577502}), for arbitrarily small $\varepsilon'>0$, we can find $\varepsilon>0$ sufficiently small to ensure that
  \begin{equation}\label{eq:iphiconj}
    \iota_i\varphi_i = u\cdot\varphi\cdot u^{-1},\quad \text{some }u\in A^{\times}\text{ with }\|u-1\|<\varepsilon'.
  \end{equation}
  Surjectivity will allow lifting $u\in A^{\times}$ to some $u_i\in A_i$, and
  \begin{equation*}
    \|u_j-1\|<2\varepsilon'\text{ for large }j\ge i,\quad u_j:=\iota_{ji}(u_i)\in A_j.
  \end{equation*}
  $u_j$ will in particular be invertible if $\varepsilon'$ is small enough ($2\varepsilon'<1$ will do \cite[\S VII.2, Lemma 2.1]{con_course-2e}), and \Cref{eq:iphiconj} shows that
  \begin{equation*}
    \iota_j\circ\left(u_j^{-1}\cdot\varphi_j\cdot u_j\right) = \varphi
    \quad\text{for}\quad
    \begin{tikzpicture}[auto,baseline=(current  bounding  box.center)]
      \path[anchor=base] 
      (0,0) node (l) {$\bG$}
      +(2,.5) node (u) {$A_i$}
      +(4,0) node (r) {$A_j$}
      ;
      \draw[->] (l) to[bend left=6] node[pos=.5,auto] {$\scriptstyle \varphi_i$} (u);
      \draw[->>] (u) to[bend left=6] node[pos=.5,auto] {$\scriptstyle \iota_{ji}$} (r);
      \draw[->] (l) to[bend right=6] node[pos=.5,auto,swap] {$\scriptstyle \varphi_j$} (r);
    \end{tikzpicture}
  \end{equation*}
  This all admits the obvious modifications in the $C^*$ case: one conjugates by {\it unitaries} in place of arbitrary invertibles, using \cite[Proposition 1.7]{mart-proj} instead of \cite[Lemme 7]{zbMATH03577502}, and passing (when and wherever necessary) from an invertible $x$ (in $A^{\times}$ or $A_{i}^{\times}$) to the corresponding unitary resulting from the {\it polar decomposition} \cite[\S II.3.2.9]{blk} $x=u|x|$.
\end{proof}

\section{Banach-space finite presentability via almost affine maps}\label{se:banfinpres}

The brief detour taken in the course of the alternative proof of \Cref{th:ssalgsmall} into the $\aleph_0$-generation material of \cite[\S 7]{zbMATH07461229} (and by extension, the generalizations thereof in \cite[\S 3]{zbMATH07760731}) suggests a closer examination of the links between the present material and those sources.

One aspect of this will be to recover the {\it $\cat{CMet}$-enriched finite presentability} of \cite[Theorem 3.1]{zbMATH07760731} with a different proof, illustrating a technique that could have been applied in \Cref{se:sml} as well. Recall first the result, very much analogous to \Cref{th:cpct2ban,th:cgpsmall,th:ssalgsmall}:

\begin{theorem}\label{th:banfinpres}
  For a finite-dimensional Banach space $F$ the functor
  \begin{equation*}
    \cat{Ban}_{\le 1}
    \xrightarrow{\quad\cat{Ban}_{\le 1}(F,-) \quad}
    \cat{CMet}
  \end{equation*}
  preserves filtered colimits. 
\end{theorem}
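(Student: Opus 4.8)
The plan is to realize $\cat{Ban}_{\le 1}(F,-)$ as a natural isometric subfunctor of $\cat{Cont}(B_F,-)$, where $B_F$ denotes the closed unit ball of $F$ — compact because $\dim F<\infty$. Indeed, a contractive linear map $F\to E$ is determined by its restriction to $B_F$, and on linear maps the operator-norm distance coincides with the uniform distance on $\cat{Cont}(B_F,E)$. Fix a filtered colimit $E=\varinjlim_i E_i$ in $\cat{Ban}_{\le 1}$ with structure maps $\iota_i\colon E_i\to E$ and $\iota_{ji}\colon E_i\to E_j$ for $i\le j$. The proof of \Cref{le:cpct2metric}, which uses only compactness of the domain, applies verbatim to linear $f,g\colon F\to E_i$ (replacing $K$ by $B_F$) and shows that
\[
  \varinjlim_i\cat{Ban}_{\le 1}(F,E_i)\;\longrightarrow\;\cat{Ban}_{\le 1}(F,E)
\]
is an isometric embedding; since each $\cat{Ban}_{\le 1}(F,E_i)$ is complete (a closed ball in the Banach space $\cL(F,E_i)$), so is the colimit, and hence the image of this embedding is closed. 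The whole theorem therefore reduces to a density statement: every $\varphi\in\cat{Ban}_{\le 1}(F,E)$ is an operator-norm limit of maps $\iota_j\circ\varphi_j$ with $\varphi_j\in\cat{Ban}_{\le 1}(F,E_j)$.

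First I would linearize. Fix a basis $e_1,\dots,e_n$ of $F$ with $\|e_k\|\le 1$, write $x=\sum_k\lambda_k(x)e_k$, and put $C:=\sup_{x\in B_F}\sum_k|\lambda_k(x)|<\infty$ (a constant depending only on $F$ and the chosen basis). By \Cref{th:cpct2ban} applied to the finite — hence compact — discrete space $\{e_1,\dots,e_n\}$ (equivalently: $\bigcup_i\iota_i(E_i)$ is dense in $E$, and filteredness lets one approximate finitely many elements of $E$ from a single $E_i$), given $\varepsilon>0$ there are an index $i$ and vectors $w_1,\dots,w_n\in E_i$ with $\|\iota_i w_k-\varphi(e_k)\|<\varepsilon$ for all $k$. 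Letting $\varphi_i\colon F\to E_i$ be the linear map $e_k\mapsto w_k$ gives $\|\iota_i\varphi_i-\varphi\|\le C\varepsilon$ in operator norm; in particular $\|\iota_i\varphi_i\|<1+C\varepsilon$. But $\varphi_i$ itself need not be a contraction, since $\iota_i$ need not be isometric.

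Repairing that defect is the one genuinely nontrivial step, and it is where compactness of $B_F$ enters a second time. Because $\|\iota_i(v)\|=\inf_{j\ge i}\|\iota_{ji}(v)\|$ (a decreasing net) for each $v\in E_i$, the same compactness-of-$B_F$ interchange used in \Cref{le:cpct2metric} yields
\[
  \inf_{j\ge i}\|\iota_{ji}\varphi_i\|
  \;=\;\inf_{j\ge i}\,\sup_{x\in B_F}\|\iota_{ji}\varphi_i(x)\|
  \;=\;\sup_{x\in B_F}\,\inf_{j\ge i}\|\iota_{ji}\varphi_i(x)\|
  \;=\;\|\iota_i\varphi_i\|\;<\;1+C\varepsilon,
\]
so we may pick $j\ge i$ with $\|\iota_{ji}\varphi_i\|<1+2C\varepsilon$. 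Then $\varphi_j:=(1+2C\varepsilon)^{-1}\,\iota_{ji}\varphi_i$ lies in $\cat{Ban}_{\le 1}(F,E_j)$, satisfies $\iota_j\varphi_j=(1+2C\varepsilon)^{-1}\,\iota_i\varphi_i$, and hence $\|\iota_j\varphi_j-\varphi\|\le C\varepsilon+\bigl(1-(1+2C\varepsilon)^{-1}\bigr)(1+C\varepsilon)=O(\varepsilon)$. As $\varepsilon$ was arbitrary this establishes density, hence — the image being closed — surjectivity, which together with the isometric embedding gives the claimed preservation of filtered colimits.

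I expect the main obstacle to be exactly the renormalization step just described: an approximant manufactured inside some $E_i$ can badly fail to be contractive there, because the norm of $E=\varinjlim_i E_i$ is only an infimum of the norms further along the diagram, and it is the compactness of the unit ball $B_F$ — available precisely because $F$ is finite-dimensional — that turns this pointwise infimum into a uniform one and lets one restore contractivity by moving to a later stage. Everything else is soft: the isometric-embedding half is \Cref{le:cpct2metric}, and the linearization half is immediate once one restricts to a basis and borrows \Cref{th:cpct2ban}. (Alternatively one could import the $\cat{CMet}$-enriched finite generation of finite-dimensional Banach spaces from \cite[Proposition 7.6]{zbMATH07461229} for the density step and retain only the renormalization and closedness arguments, but the route above is self-contained given what precedes.)
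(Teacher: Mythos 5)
Your proof is correct, but it takes a genuinely different route from the paper's. The paper reduces to the case of an isometric embedding $F\hookrightarrow E$ and then splits the problem in two: filtered colimits with \emph{isometric} connecting maps (the $\cat{CMet}$-enriched finite \emph{generation} of $F$, proved there by a Grassmannian-manifold/graph-of-an-operator argument, or quoted from \cite[Proposition 7.6]{zbMATH07461229}) and filtered colimits with \emph{surjective} connecting maps (\Cref{le:bansurjconn}), gluing the two by passing through the images $\iota_i(E_i)\le E$. You instead argue directly: approximate $\varphi$ on a basis to obtain a linear, not-necessarily-contractive $\varphi_i\colon F\to E_i$ with $\iota_i\varphi_i$ operator-norm close to $\varphi$, then use $\|\iota_i v\|=\inf_{j\ge i}\|\iota_{ji}v\|$ together with a Dini-type $\inf$--$\sup$ interchange over the compact ball $B_F$ to find $j$ with $\|\iota_{ji}\varphi_i\|$ barely above $1$, and rescale. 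The ingredients all check out: the interchange is legitimate because the net $x\mapsto\|\iota_{ji}\varphi_i(x)\|$ is decreasing in $j$, consists of continuous functions, and has continuous pointwise limit $\|\iota_i\varphi_i(\cdot)\|$ on the compact set $B_F$ (the same subnet argument as in \Cref{le:cpct2metric}, or \cite[Corollary 6]{MR3462046}); and your closed-image observation correctly converts surjectivity into density. Your renormalization is in effect the same correction that closes the proof of \Cref{le:bansurjconn}, applied once and for all rather than only in the surjective case. What the paper's decomposition buys is two reusable intermediate statements (finite generation with respect to isometries, of independent interest, and the surjective-colimit lemma, whose failure at the $\cat{Set}$ level is the point of the counterexample following it); what your argument buys is brevity and self-containedness, and it is arguably closer in spirit to the original proof of \cite[Theorem 3.1]{zbMATH07760731} to which the paper offers an alternative.
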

\begin{proof}
  As usual, let 
  \begin{equation}\label{eq:bandiltcolim}
    E_i\xrightarrow{\quad\iota_i\quad}E=\varinjlim_i E_i
  \end{equation}
  be a filtered $\cat{Ban}_{\le 1}$-colimit.

  The isometric embedding of \Cref{le:cpct2metric} goes through for the closed unit ball $K=F_{\le 1}$ of $F$ when restricting to morphisms of {\it totally convex spaces} (\cite[\S 2]{zbMATH03879553}, \cite[Example 1.48]{ar}), i.e. those satisfying
  \begin{equation*}
    \varphi\left(\sum_{n\in \bZ_{\ge 0}}a_n x_n\right)
    =
    \sum_{n\in \bZ_{\ge 0}}a_n \varphi(x_n)
    ,\quad
    \forall x_n\in K
    ,\quad
    \forall \sum|a_n|\le 1. 
  \end{equation*}
  This again allows us to focus attention on surjectivity: an arbitrary linear non-expansive $F\xrightarrow{\varphi}E$ belongs to the image of the canonical map
  \begin{equation}\label{eq:feifei}
    \varinjlim_i \cat{Ban}_{\le 1}(F,E_i)
    \lhook\joinrel\xrightarrow{\quad}
    \cat{Ban}_{\le 1}\left(F,\varinjlim_i E_i\right)
    =
    \cat{Ban}_{\le 1}(F,E).
  \end{equation}
  There is furthermore no loss in assuming
  \begin{equation}\label{eq:f2eemb}
    F\lhook\joinrel\xrightarrow{\quad\varphi\quad}E
  \end{equation}

  embedded isometrically via $\varphi$, and we henceforth will: we can always substitute the inclusion of $F/\ker\varphi$ with its metric inherited from the resulting embedding into $E$ for the original data. 
  
  The argument now branches.
  \begin{enumerate}[(I),wide=0pt]
  \item\label{item:th:banfinpres:isometr} {\bf : Isometric connecting maps $E_i\lhook\joinrel\xrightarrow{\iota_{ji}}E_j$.} This is nothing but the ($\cat{CMet}$-enriched) finite {\it generation} of \cite[Proposition 7.6]{zbMATH07461229}, to which we can simply appeal. Alternatively, the following argument might be of some independent interest.

    The realization of $E$ as the closed union of the $E_i\le E$ implies in particular that the unit ball $F_{\le 1}$ will be contained in the $\varepsilon$-neighborhood of the unit ball $E_{i,\le 1}$ for some $i$ (for arbitrarily small $\varepsilon$). This means in particular that the {\it Hausdorff distance} \cite[Definition 7.3.1]{bbi}
    \begin{equation*}
      d_H(F_{i,\le 1},F_{\le 1})
      :=
      \inf\left\{r>0\ |\ \right\}
    \end{equation*}
    is small for some $(d:=\dim F)$-dimensional subspace $F_i\le E_i$. In particular, as explained in \cite[p.17]{zbMATH03236278}, $F$ is in the closure of the subset 
    \begin{equation*}
      \left\{F_i\le E_i\ |\ \dim F_i=d:=\dim F,\ i\in I\right\}
    \end{equation*}
    of the {\it Grassmannian (Banach) manifold} (\cite[\S 2.1]{zbMATH03236278}, \cite[Example 2.21]{zbMATH07027314})
    \begin{equation*}
      G(d,E):=\left\{\text{$d$-dimensional subspaces of }E\right\}.
    \end{equation*}
    Per the same sources \cite[\S 2.1]{zbMATH03236278} and/or \cite[Example 2.21]{zbMATH07027314}, having fixed a supplement $E'$ of $F\le E$ in the sense that $E=F\oplus E'$, there is an identification
    \begin{equation*}
      \cL(F,E')\ni L
      \xmapsto{\quad}
      \left(\Gamma_L:=\text{graph of $L$}\right)\in G(d,E).
    \end{equation*}
    In particular, for $F_i$ as above sufficiently close to $F$, we have $F_i=\Gamma_{L_i}$ for small-norm linear $F\xrightarrow{L_i}E'$. It follows that $F\xrightarrow[\ \cong\ ]{1+L_i}F_i$ is a linear isomorphism close to $1$, so a small rescaling will turn it into a contraction $F\cong F_i\le E_i$ that approximates the original inclusion $F\le E$. 
    
    % % In other words, we propose to first recover the ($\cat{CMet}$-enriched) finite {\it generation} of \cite[Proposition 7.6]{zbMATH07461229}, again with a different proof.
    % % 
    % % The claim is now that the embedding \Cref{eq:f2eemb} is arbitrarily approximable with embeddings $F\lhook\joinrel\xrightarrow{}E_i$. $F$ will be arbitrarily close to $(d=\dim F)$-dimensional subspaces of $E_i$ in the {\it Grassmannian} \cite[preceding Lemma 3.12]{upm-ban} $G(d,E)$ of $d$-dimensional subspaces of $E$. 
    % % 
    
  \item\label{item:th:banfinpres:red2surj} {\bf : Reduction to the case of surjective $E_i\xrightarrowdbl{\iota_{ji}}E_j$.} Apply step \Cref{item:th:banfinpres:isometr} to the filtered diagram consisting of the embeddings
    \begin{equation*}
      \overline{E}_i:=\iota_i(E_i)\lhook\joinrel\xrightarrow{\quad}E
    \end{equation*}
    to conclude that $\varphi$ is well approximable with an embedding into one such $\overline{E_i}$, and then replace the original diagram with that consisting of the images of $E_i\xrightarrow{\iota_{ji}}E_j$ for $j\ge i$; the connecting maps are by construction surjections. 
    
  \item {\bf : Conclusion.} Having reduced the problem to filtered diagrams of {\it sur}jections, we relegate that case to \Cref{le:bansurjconn}.
  \end{enumerate}
\end{proof}

\begin{lemma}\label{le:bansurjconn}
  For finite-dimensional $F\in\cat{Ban}$ the functor
  \begin{equation*}
    \cat{Ban}_{\le 1}
    \xrightarrow{\quad \cat{Ban}_{\le 1}(F,-) \quad}
    \cat{CMet}
  \end{equation*}
  preserves filtered colimits of surjective morphisms. 
\end{lemma}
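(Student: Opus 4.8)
The plan is to follow the template of \Cref{th:cpct2ban,th:banfinpres}. Write the colimit as $E=\varinjlim_i E_i$, with structure maps $E_i\xrightarrow{\iota_i}E$ and (now surjective) connecting maps $E_i\xrightarrow{\iota_{ji}}E_j$ for $i\le j$. First I would invoke the isometric-embedding half of \Cref{le:cpct2metric}, adapted to totally convex morphisms of $K=F_{\le 1}$ exactly as at the start of the proof of \Cref{th:banfinpres}, to see that
\[
  \varinjlim_i\cat{Ban}_{\le 1}(F,E_i)\lhook\joinrel\xrightarrow{\quad}\cat{Ban}_{\le 1}(F,E)
\]
is an isometric embedding; since its domain is complete the image is closed, so it remains only to show that image is dense. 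Concretely: every contraction $F\xrightarrow{\varphi}E$ must be approximable, in the operator norm on $\cL(F,E)$, by composites $\iota_i\circ\varphi_i$ with $\varphi_i\colon F\to E_i$ a contraction.

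For this I would use the explicit form of $E$: it is the completion of the plain (vector-space) filtered colimit $\varinjlim_i E_i$ under the seminorm sending the class of $x\in E_i$ to $\inf_{j\ge i}\|\iota_{ji}x\|_{E_j}$, and since the $\iota_{ji}$ are contractions this infimum is a decreasing limit, so $\|\iota_i x\|_E=\lim_{j\ge i}\|\iota_{ji}x\|_{E_j}$ for $x\in E_i$; surjectivity of the $\iota_{ji}$ moreover makes the images $\iota_i(E_i)$ coincide, forming a single dense subspace of $E$ (this is used only for convenience). Fixing a nonzero contraction $\varphi$, a basis $f_1,\dots,f_d$ of $F$, and $\varepsilon>0$, density first yields $i_0$ and a linear $\psi\colon F\to E_{i_0}$ with each $\iota_{i_0}\psi(f_k)$ as close as desired to $\varphi(f_k)$; finite-dimensionality of $F$ then upgrades this to $\|\iota_{i_0}\psi-\varphi\|<\delta$ in operator norm, for any preassigned $\delta>0$. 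The map $\psi$ need not be a contraction, and the correction is the point of the proof: passing to $\psi_j:=\iota_{ji_0}\psi\colon F\to E_j$ ($j\ge i_0$) leaves $\iota_j\psi_j=\iota_{i_0}\psi$ unchanged, makes $j\mapsto\|\psi_j\|$ nonincreasing, and, by the seminorm formula above, gives $\lim_j\|\psi_j(f)\|_{E_j}=\|\iota_{i_0}\psi(f)\|_E\le(1+\delta)\|f\|$ for every $f\in F$.

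The main obstacle, and the only substantial step, is the interchange $\inf_j\|\psi_j\|\le 1+\delta$ — exchanging the colimit over $j$ with the supremum over the unit sphere of $F$ — which is exactly where finite-dimensionality of $F$ is used. I would argue it by compactness: $\|\psi_j\|$ is attained at some $f^{(j)}$ on the compact unit sphere $S\subset F$; a subnet has $f^{(j)}\to f^\ast\in S$, and since the bounded monotone net $\|\psi_j\|$ satisfies
\[
  \bigl|\,\|\psi_j(f^{(j)})\|_{E_j}-\|\psi_j(f^\ast)\|_{E_j}\,\bigr|\le\|\psi_j\|\,\|f^{(j)}-f^\ast\|\longrightarrow 0,
\]
its limit — equal to its infimum — is $\lim_j\|\psi_j(f^\ast)\|_{E_j}=\|\iota_{i_0}\psi(f^\ast)\|_E\le 1+\delta$. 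Choosing $j$ with $\|\psi_j\|<1+2\delta$ and putting $\varphi_j:=(1+2\delta)^{-1}\psi_j$, a contraction $F\to E_j$, one gets $\|\iota_j\varphi_j-\varphi\|\le(1+2\delta)^{-1}\|\iota_{i_0}\psi-\varphi\|+\bigl(1-(1+2\delta)^{-1}\bigr)\|\varphi\|<3\delta$, and $\delta<\varepsilon/3$ finishes it. For a version avoiding subnets, a Dini-type argument in the spirit of \Cref{re:dini}, applied to the functions $f\mapsto\|\psi_j(f)\|_{E_j}$ on $S$, does the same work.
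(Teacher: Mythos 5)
Your argument is correct, but it is not the route the paper takes for this lemma, and it in fact proves strictly more than the lemma asserts. The paper's proof leans on surjectivity in an essential way: after reducing to the case where $\varphi$ is an isometric embedding $F\le E$, it lifts $F$ through the surjection $\iota_i$ to a $(\dim F)$-dimensional subspace $F_i\le E_i$, notes that the resulting maps $F_j:=\iota_{ji}(F_i)\xrightarrow{\iota_j}F$ are linear isomorphisms whose inverses' norms tend to $1$, and takes a rescaled such inverse as the approximate lift. You, by contrast, never use surjectivity (as you yourself observe): you approximate $\varphi$ on a basis using only the density of $\bigcup_i\iota_i(E_i)$ in $E$, push the resulting $\psi$ forward along the connecting maps, and control $\inf_j\|\psi_j\|$ by interchanging $\inf_j$ with the supremum over the compact unit sphere --- which is precisely the mechanism of \Cref{le:cpct2metric} (applied to $\psi|_S$ and $0$) and of the Dini-type argument mentioned in \Cref{re:dini}. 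The individual steps all check out: the seminorm formula $\|\iota_i x\|_E=\inf_{j\ge i}\|\iota_{ji}x\|_{E_j}$ is the standard description of poset-indexed filtered colimits in $\cat{Ban}_{\le 1}$, the monotone-net and subnet bookkeeping is sound, and the final $3\delta$ estimate is correct. The upshot is that you have given a direct proof of \Cref{th:banfinpres} itself --- essentially rediscovering the approach of the cited \cite[Theorem 3.1]{zbMATH07760731} --- which would render the Grassmannian step and the reduction to surjective connecting maps in the paper's proof of that theorem unnecessary. What the paper's surjectivity-based argument buys instead is an ``exact lift up to rescaling'' picture, which both keeps this lemma genuinely easy and makes the failure of exact (unrescaled) lifting in \Cref{ex:seqisos} transparent.
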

\begin{proof}
  As always, the issue is the surjectivity of \Cref{eq:feifei}, which however in the present context (of surjective connecting maps $E_i\xrightarrowdbl{\iota_{ji}}E_j$) is almost immediate. Harmlessly assuming $F\lhook\joinrel\xrightarrow{\varphi}E$ to be an isometric embedding (as in the proof of \Cref{th:banfinpres}), lift $F$ to some $F_i\le E_i$ mapped onto it isomorphically by $E_i\xrightarrowdbl{\iota_i}E$ (possible, by surjectivity and the finite-dimensionality of $F$), and replace the original diagram
  \begin{equation*}
    \left(E_j\xrightarrow{\quad\iota_{j'j}}E_{j'}\right)_{j\le j'}
    \quad\text{with}\quad
    \left(F_j\xrightarrow{\quad\iota_{j'j}}F_{j'}\right)_{i\le j\le j'},\quad F_j:=\iota_{ji}(F_i). 
  \end{equation*}
  The colimit
  \begin{equation*}
    F_j\xrightarrow{\quad\iota_j\quad}F\cong \varinjlim_j F_j
  \end{equation*}
  now consists of linear isomorphisms whose inverses' norms approach $1$. Now take the desired approximate lift to be the inverse of some $\iota_j$ for $j$ large, slightly rescaled so as to ensure non-expansivity. 
\end{proof}

Note, incidentally, that the obvious parallel to \Cref{th:liftepi} does {\it not} hold in the context of \Cref{le:bansurjconn}: 
\begin{equation*}
  \cat{Ban}_{\le 1}
  \xrightarrow{\quad \cat{Ban}_{\le 1}(F,-) \quad}
  \cat{Set}
\end{equation*}
does not, in general, preserve filtered colimits of surjective morphisms, for reasons evident in the very last step of the proof of \Cref{le:bansurjconn}: there is a scaling required to correct for possible expansivity. This is the case even if one restricts attention to sequential chains of linear isomorphisms:

\begin{example}\label{ex:seqisos}
  By the usual (e.g. \cite[Propositions 1.1.6 and 1.1.8]{zbMATH00949298}) correspondence between norms and their unit balls, a Banach space structure on $\bR^2$ is determined by the convex, origin-symmetric convex body that is to be that structure's unit ball, and any such will do. There is thus a colimit
  \begin{equation*}
    \bR^2=: F_i\xrightarrow{\quad \iota_i:=\id\quad} F=\varinjlim_i F_i\cong \bR^2
  \end{equation*}
  with the unit balls of $F_i$ being, respectively, the convex hulls of the $(2i)^{th}$ roots of unity and that of $F$ being the usual unit disk. Naturally, $F\xrightarrow{\id}F$ does not split through any {\it non-expansive} linear $F\to F_i$. 
\end{example}

\begin{remark}\label{re:epimonosplitalgs}
  As alluded to above, the same strategy (as in \Cref{th:banfinpres}) could have been employed in \Cref{th:cgpsmall,th:ssalgsmall}, of settling the isometric- and surjective-connecting-map cases separately and conjoining them. The $C^*$ case, for instance, could have been pieced together from
  \begin{itemize}[wide]
  \item the isometric-embedding version of \Cref{th:ssalgsmall}\Cref{item:th:ssalgsmall:cast}, i.e. \cite[Proposition 3.5]{zbMATH07595194} or \cite[Lemma 2.3]{zbMATH03416517} (taking it for granted, in the latter, that the sequence statement carries over to arbitrary filtered colimits);
  \item and the surjective version, say \cite[Corollary 2.30]{zbMATH03996430}, again taking care to transport that discussion from sequences of chained $C^*$ surjections to arbitrary filtered colimits of such. 
  \end{itemize}
\end{remark}

By way of recycling the Grassmannian argument employed in the proof of \Cref{th:banfinpres} (part \Cref{item:th:banfinpres:isometr} of that proof) we list a few variants thereof after introducing the requisite notation.

First, for a positive integer $D$, a left-hand `$\le D$' subscript on a space of maps into a vector space indicates that each of the said maps takes values in a subspace of dimension $\le D$. Example, for a topological space $K$ and a topological vector space $E$:
\begin{equation*}
  \tensor[_{\le D}]{\cat{Cont}(K,E)}{}
  :=
  \left\{\text{continuous }K\xrightarrow{f}E\ |\ \dim\spn\im f\le D\right\}.
\end{equation*}

We will also consider (typically compact) metric spaces equipped with an abstract {\it convex structure}, i.e. what \cite[Definition 1]{zbMATH06183792} (following \cite[\S 2]{MR2799809}) refers to as a {\it convex-like structure}. The definition is guessable enough: the metric space $(X,d)$ also admits finite convex combinations, i.e. a ternary operation
\begin{equation*}
  [0,1]\times X\times X
  \ni
  (\lambda,x_0,x_1)
  \xmapsto{\quad}
  \lambda x_0+(1-\lambda)x_1
  \in
  X
\end{equation*}
with the expected arithmetic properties, plus metric compatibility in the sense that
\begin{equation*}
  d\left(\sum \lambda_i x_i,\ \sum\lambda_i x_i'\right)
  \le \sum\lambda_i d\left(x_i,x_i'\right)
  \quad\text{for}\quad
  x_i,x_i'\in X
  ,\quad
  \lambda_i\ge 0
  ,\quad
  \sum\lambda_i=1.
\end{equation*}
Recall \cite[Theorem 9]{zbMATH06183792} also that when {\it complete} such structures are precisely the closed convex subsets of real Banach spaces. We write $\cat{CMet}^{\cat{cvx}}$ for the resulting category of {\it complete metric convex spaces}, with (as expected) non-expansive {\it affine} maps $X\xrightarrow{\varphi}Y$ as morphisms:
\begin{equation*}
  \varphi(\lambda x_0+(1-\lambda)x_1) = \lambda\varphi(x_0)+(1-\lambda)\varphi(x_1)
  ,\quad
  \forall \lambda\in[0,1]
  ,\quad
  \forall x_i\in X. 
\end{equation*}

\begin{remark}\label{re:distmonad}
  The symbol $\cat{CMet}^{\cat{cvx}}$ is meant as reminiscent of the notation $\cC^T$ for the ({\it Eilenberg-Moore}) category of {\it $T$-algebras} \cite[\S VI.2]{zbMATH01216133} for a {\it monad} \cite[VI.1, Definition]{zbMATH01216133} on a category $\cC$.

  Metrics aside, abstract {\it convex set} \cite[Definition 3]{zbMATH05806299} are precisely \cite[Theorem 4]{zbMATH05806299} the algebras over the {\it distribution monad} (\cite[equation (4)]{zbMATH05806299}, \cite[\S 3.1]{zbMATH06814745}) $\cD$ on $\cat{Set}$:
  \begin{equation*}
    \begin{aligned}
      \cD X&:=\left\{\text{formal convex combinations of finite tuples in }X\right\}\\
           &\cong \left\{\text{finitely-supported probability measures on }X\right\}.
    \end{aligned}    
  \end{equation*}
  Said monad lifts straightforwardly to $\cat{Met}$ (plain metric spaces) and thence restricts to $\cat{CMet}$, by equipping $\cD X$ with the {\it Kantorovich-Rubinstein distance} \cite[Particular Case 6.2 of Definition 6.1 and Remark 6.5]{zbMATH05306371}:
  \begin{equation}\label{eq:krdist}
    d(\mu,\mu'):=\sup_{\mathrm{Lip}(f)\le 1}
    \left|\int_X f\ \mathrm{d}\mu - \int_X f\ \mathrm{d}\mu'\right|
    ,\quad
    \mathrm{Lip}(f):=\inf\left\{C>0\ |\ f\text{ is $C$-Lipschitz}\right\}.
  \end{equation}
  The aforementioned lifting means that we have diagrams
  \begin{equation*}
    \begin{tikzpicture}[auto,baseline=(current  bounding  box.center)]
      \path[anchor=base] 
      (0,0) node (l) {$\cat{CMet}$}
      +(2,.5) node (u) {$\cat{CMet}$}
      +(2,-.5) node (d) {$\cat{Set}$}
      +(4,0) node (r) {$\cat{Set}$}
      ;
      \draw[->] (l) to[bend left=6] node[pos=.5,auto] {$\scriptstyle \cD$} (u);
      \draw[->] (u) to[bend left=6] node[pos=.5,auto] {$\scriptstyle \cat{forget}$} (r);
      \draw[->] (l) to[bend right=6] node[pos=.5,auto,swap] {$\scriptstyle \cat{forget}$} (d);
      \draw[->] (d) to[bend right=6] node[pos=.5,auto,swap] {$\scriptstyle \cD$} (r);
    \end{tikzpicture}
    \quad\text{and hence}\quad
    \begin{tikzpicture}[auto,baseline=(current  bounding  box.center)]
      \path[anchor=base] 
      (0,0) node (l) {$\cat{CMet}^{\cD}$}
      +(2,.5) node (u) {$\cat{CMet}$}
      +(2,-.5) node (d) {$\cat{Set}^{\cD}$}
      +(4,0) node (r) {$\cat{Set}$}
      ;
      \draw[->] (l) to[bend left=6] node[pos=.5,auto] {$\scriptstyle \cat{forget}$} (u);
      \draw[->] (u) to[bend left=6] node[pos=.5,auto] {$\scriptstyle \cat{forget}$} (r);
      \draw[->] (l) to[bend right=6] node[pos=.5,auto,swap] {$\scriptstyle \cat{forget}$} (d);
      \draw[->] (d) to[bend right=6] node[pos=.5,auto,swap] {$\scriptstyle \cat{forget}$} (r);
    \end{tikzpicture}
  \end{equation*}
  of categories and functors, commutative up to the obvious natural isomorphisms. To come back full circle, it will be a simple exercise to verify that the resulting Eilenberg-Moore category $\cat{CMet}^{\cD}$ is nothing but $\cat{CMet}^{\cat{cvx}}$; the preference for `$\cat{cvx}$' over `$\cD$' stems from the former's being somewhat more descriptive. 
\end{remark}

{\it Mutatis mutandis}:

\begin{definition}\label{def:acvx}
  The monad $\cat{Set}\xrightarrow{\cat{acvx}}\cat{Set}$ associates to a set its collection of finitely-supported {\it complex}-valued measures $\mu$ of {\it total variation} \cite[\S 6.1]{zbMATH01022658} $\le 1$:
  \begin{equation*}
    \sharp\{x\in X\ |\ \mu(x)\ne 0\}<\infty
    \quad\text{and}\quad
    \sum_X|\mu(x)|\le 1.
  \end{equation*}
  The monad lifts to (complete) metric spaces much as the distribution monad did, with \Cref{eq:krdist} as the distance between measures.

  The categories of (complete, metric) {\it absolutely convex spaces} are the resulting Eilenberg-Moore categories $\cat{Set}^{\cat{acvx}}$, $\cat{Cmet}^{\cat{acvx}}$, etc.

  We will occasionally also refer to morphisms of absolutely convex spaces as {\it absolutely affine}, by analogy to plain convex structures. 
\end{definition}

\begin{remarks}\label{res:acvx}
  \begin{enumerate}[(1), wide=0pt]
  \item The (plain, non-metric) absolutely convex spaces are the {\it absolutely convex modules} of \cite[\S 2]{zbMATH01563366} and {\it finitely totally convex spaces} of \cite[Definition 2.9]{zbMATH03879553}. They are equipped with binary operations
    \begin{equation*}
      X\times X
      \ni (x_0,x_1)
      \xmapsto{\quad}
      a_0 x_0+a_1x_1
      \in X
      \quad\text{for}\quad |a_0|+|a_1|\le 1
    \end{equation*}
    satisfying the expected arithmetic constraints. 

  \item The terminology matches that in common use in the literature on topological vector spaces: \cite[\S 15.10]{koeth_tvs-1} or \cite[\S I.1, discussion preceding Lemma 1]{rr_tvs}, say. Per the latter source, the absolutely convex structure can be recovered from plain convexity together with the scaling operations $x\mapsto{} ax$, $|a|\le 1$ (sets closed under said scaling are sometimes called {\it balanced}).
  \end{enumerate}
\end{remarks}

\begin{theorem}\label{th:fdimim}
  \begin{enumerate}[(1)]
  \item\label{item:th:fdimim:met} For a compact metric space $(K,d_K)$ and a positive integer $D$ the functor
    \begin{equation*}
      \cat{Ban}_{\le 1}
      \xrightarrow{\quad\tensor[_{\le D}]{\cat{CMet}}{}(K,-) \quad}
      \cat{CMet}
    \end{equation*}
    preserves filtered colimits. 

  \item\label{item:th:fdimim:cvmet} For a compact metric convex space $(K,d_K)$ and a positive integer $D$ the functor
    \begin{equation*}
      \cat{Ban}_{\le 1}
      \xrightarrow{\quad\tensor[_{\le D}]{\cat{CMet}}{^{\cat{cvx}}}(K,-) \quad}
      \cat{CMet}
    \end{equation*}
    preserves filtered colimits.

  \item\label{item:th:fdimim:acvmet} For a compact metric absolutely convex space $(K,d_K)$ and a positive integer $D$ the functor
    \begin{equation*}
      \cat{Ban}_{\le 1}
      \xrightarrow{\quad\tensor[_{\le D}]{\cat{CMet}}{^{\cat{acvx}}}(K,-) \quad}
      \cat{CMet}
    \end{equation*}
    preserves filtered colimits. 
  \end{enumerate}  
\end{theorem}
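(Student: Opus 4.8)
The plan is to dispatch all three parts uniformly by reducing them to the finite presentability of finite-dimensional Banach spaces established in \Cref{th:banfinpres}. Fix the compact $K$ and a filtered colimit $E=\varinjlim_i E_i$ in $\cat{Ban}_{\le 1}$ with structure maps $E_i\xrightarrow{\iota_i}E$, and write $\Phi(-)$ for whichever of the three represented functors is at issue, so that $\Phi(E)$ is the metric space of non-expansive (resp.\ affine non-expansive, resp.\ absolutely affine non-expansive) maps $K\to E$ whose image spans a subspace of dimension $\le D$, with the sup-distance. As throughout \Cref{se:sml}, it suffices to show the canonical comparison map $\varinjlim_i\Phi(E_i)\to\Phi(E)$ is simultaneously an isometric embedding and surjective.

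\textbf{Isometric embedding.} The argument proving \Cref{le:cpct2metric} — and in particular the identity \Cref{eq:supinfsup}, which concerns individual pairs of maps and is indifferent to any restriction on them — goes through after restricting, on both sides, to the affine (resp.\ absolutely affine) maps of $(\le D)$-dimensional image, exactly as with the passage to totally convex morphisms at the start of the proof of \Cref{th:banfinpres}. One needs only that these classes of maps are closed under the limits involved: a uniform limit of affine (resp.\ absolutely affine) non-expansive maps is again of the same type, and a uniform limit of maps with image in a $(\le D)$-dimensional subspace again has image in a $(\le D)$-dimensional subspace, since linear dependence of a fixed finite set of vectors in a Banach space is a closed condition. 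Hence $\Phi(E)$ is a closed, thus complete, subspace of $\cat{Cont}(K,E)$, and the comparison map is an isometric embedding of complete metric spaces.

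\textbf{Surjectivity.} Given $K\xrightarrow{\varphi}E$ in $\Phi(E)$, set $V:=\spn\im\varphi\le E$, a Banach space of dimension $\le D$ with the norm induced from $E$; write $j\colon V\hookrightarrow E$ for the inclusion (a morphism in $\cat{Ban}_{\le 1}$), and put $R:=\sup_{p\in K}\|\varphi(p)\|$, finite because $\varphi(K)$ is compact. Applying \Cref{th:banfinpres} to $V$: for every $\varepsilon>0$ there are an index $i$ and a contraction $T_i\colon V\to E_i$ in $\cat{Ban}_{\le 1}$ with $\|\iota_i T_i-j\|_{\cL(V,E)}<\varepsilon$. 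Set $\varphi_i:=T_i\circ\varphi\colon K\to E_i$. Then $\varphi_i$ is non-expansive (a composite of non-expansive maps), has image in the $(\le D)$-dimensional subspace $T_i(V)\le E_i$, and inherits affineness or absolute affineness from $\varphi$ because the linear map $T_i$ preserves convex and absolutely convex combinations; so $[\varphi_i]$ is an element of $\varinjlim_i\Phi(E_i)$. Since $j(\varphi(p))=\varphi(p)$ for all $p$, we have for each $p\in K$
\[
\|\iota_i\varphi_i(p)-\varphi(p)\|=\|(\iota_i T_i-j)(\varphi(p))\|\le\varepsilon R,
\]
so the image of $[\varphi_i]$ in $\Phi(E)$ is within $\varepsilon R$ of $\varphi$. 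As $\varepsilon>0$ was arbitrary and, by the previous step, $\varinjlim_i\Phi(E_i)$ maps isometrically onto a complete, hence closed, subset of $\Phi(E)$, the element $\varphi$ itself lies in that image. This gives surjectivity, completing the proof.

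\textbf{Main obstacle.} The substance here is entirely in \Cref{th:banfinpres}; the one point needing care is the reduction in the surjectivity step, namely that applying \Cref{th:banfinpres} to the finite-dimensional span $V=\spn\im\varphi$ is the right move and simultaneously settles all three parts. The approximating map $T_i$ is linear — so it transports the convex or absolutely convex structure at no cost — and is a $\cat{Ban}_{\le 1}$-morphism — so $\varphi_i=T_i\circ\varphi$ is automatically non-expansive, with no rescaling of the sort required in \Cref{le:bansurjconn}; these two facts together are exactly what places $\varphi_i$ back in $\Phi(E_i)$. Everything else is the routine bookkeeping of the first step — that affineness, absolute affineness and the dimension bound survive uniform limits and passage through the filtered colimit — which is what makes source and target of the comparison map complete and thereby upgrades norm-approximation of $\varphi$ to honest membership in the image.
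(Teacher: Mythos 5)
Your proof is correct and follows essentially the same route as the paper's: the isometric-embedding half comes from restricting \Cref{le:cpct2metric} to the relevant subspaces of maps, and surjectivity is obtained by factoring $\varphi$ through the finite-dimensional span of its image and applying \Cref{th:banfinpres} to the inclusion $\spn\im\varphi\hookrightarrow E$. Your write-up merely makes explicit the bookkeeping (linearity of the approximating contraction preserving the affine/absolutely affine structure, closedness of the dimension bound and affineness under uniform limits, and the closed-image argument upgrading approximability to membership) that the paper leaves implicit.
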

\begin{proof}  
  As observed repeatedly, the fact that the canonical map from the colimit of hom spaces to the hom space into the colimit is an isometric embedding is immediate by restricting \Cref{le:cpct2metric} to the appropriate spaces of maps (non-expansive, affine, etc.). What is at issue, rather, is the surjectivity of
  \begin{equation*}
    \varinjlim_i \cC(K,E_i)
    \xrightarrow{\quad}
    \cC\left(K,\varinjlim_i E_i\right)
    =
    \cC(K,E)
  \end{equation*}
  for the various categories $\cC$ and a Banach-space filtered colimit \Cref{eq:bandiltcolim}.
  
  % % We now follow the same strategy as in \Cref{th:banfinpres}, assuming throughout that $K\lhook\joinrel\xrightarrow{\varphi}E$ is an isometric embedding (also affine or absolutely affine, as needed), factoring through a $D$-dimensional subspace of $E$. 
  % % 
  
  % % \begin{description}[wide=0pt]
  % % \item {\bf Reduction to surjective connecting maps.} This carries through precisely as in the proof of \Cref{th:banfinpres}.
  % %   
  % %   Because we are assuming that $\dim\spn K\le D$, the Grassmannian argument in step \Cref{item:th:banfinpres:isometr} shows that for large $i$, an invertible operator on $E$ arbitrarily close to $1$ will map $K$ into one of the subspaces $E_i\le E$ when the connecting maps are assumed isometric. Step \Cref{item:th:banfinpres:red2surj} of the same proof then goes through verbatim.
  % %   
  % %   We will henceforth assume that the fixed colimit \Cref{eq:bandiltcolim} is one of surjections ({\it filtered quotients} is a convenient short-hand phrase). 
  % %   
  % % \item {\bf Proof of \Cref{item:th:fdimim:cvmet}, assuming \Cref{item:th:fdimim:met}.}
  % %   
  % %   {\color{magenta} do}
  % %   
  % % \item {\bf Proof of \Cref{item:th:fdimim:acvmet}, assuming \Cref{item:th:fdimim:met}.}
  % %   
  % %   {\color{magenta} do}
  % %   
  % % \item {\bf Conclusion.} We have reduced the problem to proving \Cref{item:th:fdimim:met} for filtered quotients.
  % %   
  % %   {\color{magenta} do}    
  % % \end{description}
  % % This finishes the proof. 
  % % 
  
  All claims, at this point, follow from \Cref{th:banfinpres}: a morphism $K\xrightarrow{} E$ with the requisite affineness properties factors through the isometric embedding $\spn\varphi(K)\lhook\joinrel\xrightarrow{}E$, to which \Cref{th:banfinpres} applies.
\end{proof}

\begin{remark}\label{re:acmetricadj}
  \cite[\S 6]{zbMATH03879553} equips every absolutely convex space (i.e. object of $\cat{Set}^{\cat{acvx}}$, in the notation of \Cref{def:acvx}) with a {\it seminorm} $\|\cdot\|$: defined as one usually does \cite[\S 14.1]{koeth_tvs-1} on vector spaces, except that scaling by $\lambda$ is only allowed for $|\lambda|\l1 1$. This will make every such space ($X$, say) {\it pseudometric} \cite[Definition 2.1]{wil-top} via
  \begin{equation*}
    d(x,x') := \|x\| + \|x'\|,\ \forall x,x'\in X.
  \end{equation*}
  Upon identifying points not distinguished by that pseudometric, we obtain a convex {\it metric} space. All in all, we have an adjunction
  \begin{equation*}
    \begin{tikzpicture}[auto,baseline=(current  bounding  box.center)]
      \path[anchor=base] 
      (0,0) node (l) {$\cat{Set}^{\cat{acvx}}$}
      +(4,0) node (r) {$\cat{Met}^{\cat{acvx}}$}
      +(2,0) node (m) {$\bot$}
      ;
      \draw[->] (l) to[bend left=20] node[pos=.5,auto] {$\scriptstyle $} (r);
      \draw[->] (r) to[bend left=20] node[pos=.5,auto] {$\scriptstyle \cat{forget}$} (l);
    \end{tikzpicture}
  \end{equation*}
  (the tail of the $\top$ pointing towards the left adjoint, as customary \cite[Definition 19.3]{ahs}). Functoriality, for instance follows from the fact \cite[Lemma 6.3]{zbMATH03879553} that morphisms in $\cat{Set}^{\cat{acvx}}$ are automatically $\|\cdot\|$-contractive.

  One can then also complete distances to push the adjunction further into $\cat{CMet}^{\cat{acvx}}$. 
\end{remark}

%%%%%%%%%%%%%%%%%%%%%%%%%%%%%%%%%%%%%%%%%%%%%%%%%%%%%%%%%%%%%%%%%%%%%%%%%%%%%
%%%%%%%%%%%%%%%%%%%%%%%%%%%%%%%%%%%%%%%%%%%%%%%%%%%%%%%%%%%%%%%%%%%%%%%%%%%%%

\addcontentsline{toc}{section}{References}
%\bibliography{bib}{}
%\bibliographystyle{plain}

\Addresses

\end{document}